\newtheorem{thm}{Theorem}[section]
\newtheorem{lem}[thm]{Lemma}
\newtheorem{conj}[thm]{Conjecture}
\theoremstyle{remark}
\newtheorem{rem}[thm]{Remark}
\newtheorem*{prfofthm1}{Proof of Theorem~\ref{Mainthm}}
\newtheorem*{acknowledgement}{Acknowledgment}
\title{Durfee-type inequality for complete intersection surface singularities}
\author{Makoto Enokizono}
\subjclass[2010]{14D06}
\thanks{
	{\bf Keywords:}
fibered surface, singularity, complete intersection}
\address{Makoto Enokizono,
	Department of Mathematics,
	Graduate School of Science,
	Osaka University,
	Toyonaka, Osaka 560-0043, Japan}
\email{m-enokizono@cr.math.sci.osaka-u.ac.jp}
\begin{document}
\maketitle

\begin{abstract}
We prove that the signature of the Milnor fiber of smoothings of a $2$-dimensional isolated complete intersection singularity does not exceed the negative number determined by the geometric genus, the embedding dimension and the number of irreducible components of the exceptional set of the minimal resolution, which implies  Durfee's weak conjecture and a partial answer to Kerner--N\'emethi's conjecture.
\end{abstract}

\section*{Introduction}

Let $(X,o)$ be an analytic germ of a $2$-dimensional complex isolated complete intersection singularity with embedding dimension $n\ge 3$.
As analytic germs, it is isomorphic to the central fiber $o\in h^{-1}(0)$ of 
some holomorphic map-germ $h=(h_1,\ldots h_{n-2})\colon (\mathbb{C}^n,o) \to (\mathbb{C}^{n-2},0)$. 
We put $X_{\delta}=h^{-1}(\delta)$ for $\delta\in \mathbb{C}^{n-2}$ and $B_{\varepsilon}\subset \mathbb{C}^{n}$ the closed ball of radius $\varepsilon$ centered at $o$.
For a regular value $\delta\in \mathbb{C}^{n-2}$ and $\varepsilon>0$, $M=X_{\delta}\cap B_{\varepsilon}$ is a real $4$-dimensional manifold with boundary and its topology is independent of the choices of $\delta$ and $\varepsilon$ with $||\delta||>0$ and $\varepsilon>0$ sufficiently small, which is called the {\em Milnor fiber}.
The second Betti number $\mu:=\mathrm{rank}H_2(M,\mathbb{Z})$ is called the {\em Milnor number}.
Let $\mu_{+}$ (resp.\ $\mu_{-}$, $\mu_0$) be the number of positive (resp.\ negative, 0) eigenvalues of the natural intersection form $H_2(M,\mathbb{Z})\times H_2(M,\mathbb{Z})\to \mathbb{Z}$.
Then we have $\mu=\mu_{+}+\mu_{-}+\mu_0$ and call $\sigma:=\mu_{+}-\mu_{-}$ the {\em signature} of the Milnor fiber $M$.
In \cite{Dur}, Durfee conjectured that the signature $\sigma$ is always non-positive, which is nowadays called Durfee's weak conjecture (there is a counterexample to the conjecture for the non-complete intersection case, cf.\ \cite{Wah}).
Our main theorem in this paper is that Durfee's weak conjecture is true for all complete intersection surface singularities.
More precisely, we prove the following:

\begin{thm}[Theorem~\ref{Signthm}] \label{signthm}
Let $(X,o)$ be a germ of an isolated complete intersection surface singularity with embedding dimension $n$.
Let $\pi\colon \widetilde{X}\to X$ be the minimal resolution of $(X,o)$ and $p_g:=\mathrm{dim}_{\mathbb{C}}R^{1}\pi_{*}\mathcal{O}_{\widetilde{X}}$ the geometric genus of $(X,o)$.
Then we have
$$
\sigma\le -\frac{8}{3n-5}p_g-\#\mathrm{exc}(\pi)
$$
or equivalently,
\begin{equation} \label{pgmueq}
\frac{12(n-1)}{3n-5}p_g\le \mu+1-\chi_{\mathrm{top}}(\mathrm{exc}(\pi))
\end{equation}
with the equality holding if and only if $(X,o)$ is a rational double point,
where $\#\mathrm{exc}(\pi)$ is the number of irreducible components of the exceptional set $\mathrm{exc}(\pi)$ and $\chi_{\mathrm{top}}(\mathrm{exc}(\pi))$ its topological Euler number.
\end{thm}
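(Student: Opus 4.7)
The plan is to reformulate (\ref{pgmueq}) as a slope-type inequality for an auxiliary fibered surface extracted from the minimal resolution $\pi$, and then prove that slope inequality by exploiting the complete-intersection structure of its general fiber. This extends the slope-inequality approach to Durfee's conjecture in the hypersurface case to arbitrary embedding dimension.

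First I would construct the fibration: picking a generic linear form $\ell\colon \mathbb{C}^n\to \mathbb{C}$ and composing with $\pi$ (on a suitable Stein representative of $\widetilde X$) gives a holomorphic map $f\colon \widetilde X\to \Delta$ whose general fiber $F$ is the proper transform of a generic hyperplane section of $X$, which after projectivization is a smooth complete intersection curve in $\mathbb{P}^{n-1}$ of codimension $n-2$, while the central fiber carries $\mathrm{exc}(\pi)$. Compactifying and passing to a semistable model over (a finite base change of) $\Delta$ yields a proper fibered surface $f\colon S\to B$ to which the classical machinery of fibered surfaces applies. Using Grauert duality and the Leray spectral sequence for $\pi$, $\deg f_*\omega_{S/B}$ equals $p_g$ up to an explicit local contribution, while $K_{S/B}^2$ rewrites in terms of $K_{\widetilde X}^2$ and intersection numbers on $\mathrm{exc}(\pi)$; on the Milnor-fiber side, the generalized Laufer--Steenbrink formula for complete intersection surface singularities links $\mu+1-\chi_{\mathrm{top}}(\mathrm{exc}(\pi))$ and $\sigma$ to $p_g$, $K_{\widetilde X}^2$, and the topology of the singular fibers of $f$.

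The core of the argument is then a slope inequality of the form
\[
K_{S/B}^2\;\ge\;\frac{12(n-1)}{3n-5}\,\deg f_*\omega_{S/B}\;-\;(\text{local correction from }\mathrm{exc}(\pi))
\]
for fibered surfaces whose general fiber is a smooth complete intersection curve in $\mathbb{P}^{n-1}$. The coefficient $\frac{12(n-1)}{3n-5}$ specializes to Xiao's value $6$ when $n=3$ (plane curves) and is sharp for canonical embeddings of complete intersection curves. I would prove it by Xiao's harmonic-filtration method applied to the Harder--Narasimhan filtration of $f_*\omega_{S/B}$, using Clifford-type bounds on subseries of the canonical series of complete intersection curves in projective space; a Cornalba--Harris stability argument offers an alternative route. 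Combining this slope inequality with the invariant dictionary of the previous paragraph yields (\ref{pgmueq}).

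For the equality case, rational double points trivially achieve equality, since $p_g=0$, $\sigma=-\#\mathrm{exc}(\pi)$, and $\mu+1=\chi_{\mathrm{top}}(\mathrm{exc}(\pi))$; conversely, equality in the slope bound forces $p_g=0$ plus extremal geometry on the general fiber, which together with the classification of Du Val singularities identifies $(X,o)$ as a rational double point. The main obstacle I anticipate is establishing the slope inequality with the precise coefficient $\frac{12(n-1)}{3n-5}$ for this class of fibrations with the exact local correction, since Xiao's method must be carefully adapted to respect the special geometry of complete intersection fibers and to produce the sharp constant. A secondary issue is verifying that a generic choice of $\ell$ yields a non-isotrivial fibration with smooth general fiber of the expected genus, likely via a Bertini-type argument on $\widetilde X$ combined with an analysis of the equisingular stratum of $(X,o)$.
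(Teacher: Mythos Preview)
Your overall architecture (compactify, produce a complete-intersection curve fibration, prove a slope inequality, and read off the local inequality via Laufer's formula) matches the paper's. However, two concrete gaps would prevent the argument as written from going through.

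First, you have misidentified the slope constant. The coefficient $\tfrac{12(n-1)}{3n-5}$ appearing in \eqref{pgmueq} is \emph{not} the slope of the fibration; it arises only after subtracting the slope from $12$ when one passes through Laufer's formula $\mu+1-\chi_{\mathrm{top}}=12p_g+K^2$. The relevant slope bound, for fibrations whose general fiber is a complete intersection of $n-2$ hypersurfaces of degree $d$ in $\mathbb{P}^{n-1}$, is $\lambda_{n-1,d}=\dfrac{24((n-2)d-n)}{(3n-5)d-(3n-1)}$, which increases in $d$ towards $\tfrac{24(n-2)}{3n-5}$ (and equals $6$ only in the limit for $n=3$). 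Your claimed constant $\tfrac{12(n-1)}{3n-5}$ agrees with this only at $n=3$; for $n\ge 4$ it is strictly smaller, so a slope inequality with your constant would be too weak to yield \eqref{pgmueq}. Relatedly, your description ``Xiao's value $6$'' for plane curves is the asymptotic sharp slope for plane-curve fibrations, not the Xiao bound $4(g-1)/g$.

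Second, and more structurally, you are missing the mechanism that converts the global slope inequality into the purely local statement $-K^2\le \lambda\, p_g$. The paper does not attempt to compute $\chi_f$ or $K_f^2$ directly in terms of $p_g$ and $K^2$. Instead it compactifies $X$ explicitly as a fiberwise complete intersection $\overline{X}\subset\mathbb{P}^{n-1}\times\mathbb{P}^1$ (by bi-homogenizing the defining polynomials, after raising their common degree $d$ so that the compactification is smooth away from $o$), and observes that this singular model satisfies the slope \emph{equality} $K_{\overline f}^2=\lambda_{n-1,d}\,\chi_{\overline f}$ exactly (Lemma~\ref{exalem}). Combined with $K_f^2-K_{\overline f}^2=K^2$ and $\chi_f-\chi_{\overline f}=-p_g$, subtracting the equality from the inequality $K_f^2\ge \lambda_{n-1,d}\chi_f$ immediately gives $K^2+\lambda_{n-1,d}p_g\ge 0$, whence the result after letting $d\to\infty$. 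Your ``local correction'' and ``invariant dictionary'' are too vague to substitute for this: without the matching slope equality on the unresolved model, a generic slope inequality for $f$ says nothing about the difference $K^2$. Your proposed semistable reduction would also likely destroy the fiberwise complete-intersection-of-fixed-degree structure that both the slope inequality (Theorem~\ref{Mainthm}) and Lemma~\ref{exalem} require.

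As a smaller remark, the paper's proof of the slope inequality itself does not use Xiao's filtration or Cornalba--Harris stability; it is an explicit computation with Koszul resolutions, Grothendieck--Riemann--Roch, and an elementary transformation between projective bundles, under the additional hypotheses (1)--(2) of Theorem~\ref{Mainthm} (which the constructed $f$ satisfies by design). Whether your proposed methods recover the exact constant $\lambda_{n-1,d}$ for this class of fibrations is a separate open question.
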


Theorem~\ref{signthm} is a generalization of the Durfee-type inequality for hypersurface surface singularities established in \cite{Eno}.
The coefficient of $p_g$ in \eqref{pgmueq} coincides with the number $C_{2,n-2}$ defined in \cite{KeNe}.
Kerner and N\'emethi conjectured in \cite{KeNe} that for any isolated complete intersection singularity of dimension $n$ and embedding dimension $n+r$ with $n\ge 3$ or $(n,r)=(2,1)$, one has $C_{n,r}p_g\le \mu$, where $C_{n,r}$ is the rational number determined by $n$ and $r$ (more details, see \cite{KeNe}).
They also give counterexamples of complete intersection singularities for the case of $n=2$ and $r>1$.
Nevertheless, Theorem~\ref{signthm} says that Kerner-N\'emethi's conjecture ``asymptotically'' holds for $n=2$ and arbitrary $r$.
Note that $C_{n,1}=6$ and Durfee's strong conjecture is nothing but Kerner-Nemethi's conjecture for the case of $n=2$ and $r=1$, which remains open.
Durfee's weak conjecture for the hypersurface case was first proved by Koll\'ar and N\'emethi in \cite{KoNe}

To prove Theorem~\ref{signthm}, we use the method of invariants of fibered surfaces.
A {\em fibered surface} (or a {\em fibration} for short) $f\colon S\to B$ is a surjective morphism from a non-singular projective surface $S$ to a non-singular projective curve $B$ with connected fibers.
Let $K_f=K_S-f^{*}K_B$ denote the relative canonical bundle of $f$ and put $\chi_f:=\mathrm{deg}f_*\mathcal{O}(K_f)$.
The ratio $K_f^2/\chi_f$ of the self-intersection number $K_f^2$ and $\chi_f$ is called the {\em slope} of $f$.
In \cite{Eno}, the lower bound of the slope of fibered surfaces whose general fiber is a plane curve was established. 
As an application, a Durfee-type inequality holds for $2$-dimensional hypersurface singularities.
In this paper, we consider fibered surfaces whose general fiber is a complete intersection curve in the projective space and establish the lower bound of the slope of such fibrations under additional assumptions:

\begin{thm}[Theorem~\ref{Mainthm}] \label{mainthm}
Let $f\colon S\to B$ be a relatively minimal fibration whose general fiber $F$ is a complete intersection of $n-1$ hypersurfaces of degree $d$ in $\mathbb{P}^n$.
Assume the following conditions:

\smallskip

\noindent
$(1)$ There is a line bundle $\mathcal{L}$ on $S$ such that the complete linear system $|\mathcal{L}|_{F}|$ of the restriction of $\mathcal{L}$ to the general fiber $F$ defines the embedding $F\subset \mathbb{P}^n$ as complete intersection.

\smallskip

\noindent
$(2)$ Let $X$ be the image of the rational map $\phi_{|\mathcal{L}|/B}\colon S\dasharrow \mathbb{P}_B(f_{*}\mathcal{L})$.
Then any fiber $X_t=X\cap \mathbb{P}(f_{*}\mathcal{L}\otimes \mathbb{C}(t))$ over $t\in B$ is a complete intersection of $n-1$ hypersurfaces of degree $d$ in $\mathbb{P}(f_{*}\mathcal{L}\otimes \mathbb{C}(t))=\mathbb{P}^n$.

\smallskip

\noindent
Then we have
$$
K_f^2\ge \lambda_{n,d}\chi_f,
$$
where the rational number $\lambda_{n,d}$ is defined by
$$
\lambda_{n,d}=\frac{24(n-1)d-24(n+1)}{(3n-2)d-(3n+2)}.
$$

\end{thm}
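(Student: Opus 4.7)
The plan is to extend the slope-inequality strategy of \cite{Eno} for plane-curve fibrations to the complete intersection setting, exploiting assumption (2), which supplies a well-behaved global relative embedding. Set $\mathcal{E}:=f_{*}\mathcal{L}$; by assumption (1), $\operatorname{rank}\mathcal{E}=n+1$. After blowing up $S$ to resolve the indeterminacies of $\phi_{|\mathcal{L}|/B}$ and absorb its base locus, we regard the relative evaluation as a morphism $\phi\colon S\to X\subset \mathbb{P}:=\mathbb{P}_{B}(\mathcal{E})$; let $H$ denote the tautological class on $\mathbb{P}$ restricted to $X$, and $\pi\colon\mathbb{P}\to B$ the projection. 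By assumption (2), $X$ is cut out in $\mathbb{P}$ by $n-1$ sections $\sigma_{i}\in H^{0}(H^{d}\otimes \pi^{*}L_{i})$ with $L_{i}\in\operatorname{Pic}(B)$. Two successive adjunctions then yield
\[
K_{X/B}\equiv a\,H|_{X}+\pi^{*}\bigl(\det\mathcal{E}+{\textstyle\sum_{i}}L_{i}\bigr)|_{X},\qquad a:=(n-1)d-(n+1),
\]
so that pulling back produces $K_{f}=a\mathcal{L}+f^{*}M+(\text{effective vertical})$ for an explicit $M\in\operatorname{Pic}(B)$.

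Next, one extracts numerics on $\mathbb{P}$. The Grothendieck relation $H^{n+1}=\pi^{*}c_{1}(\mathcal{E})\cdot H^{n}$ (higher Chern classes vanish since $B$ is a curve), together with the expansion $[X]=\prod_{i}(dH+\pi^{*}L_{i})$ and the vanishing $\pi^{*}L_{i}\cdot\pi^{*}L_{j}=0$, gives
\[
(H|_{X})^{2}=d^{n-1}\deg\mathcal{E}+d^{n-2}{\textstyle\sum_{i}}\deg L_{i}, \qquad H|_{X}\cdot\pi^{*}M|_{X}=d^{n-1}\deg M.
\]
Combined with the formula for $K_{X/B}$, this expresses $K_{f}^{2}$ as an explicit linear combination of the two invariants $e:=\deg\mathcal{E}$ and $\ell:=\sum_{i}\deg L_{i}$; a parallel application of relative Grothendieck--Riemann--Roch on $\pi\colon X\to B$ expresses $\chi_{f}$ in the same two variables, up to non-negative contributions from singular fibers.

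The desired inequality $K_{f}^{2}\ge\lambda_{n,d}\chi_{f}$ then reduces to a linear comparison which holds provided one can bound $\ell$ from below in terms of $e$. This bound is produced by the slope method applied to the Harder--Narasimhan filtration $0=V_{0}\subset \cdots\subset V_{N}=\mathcal{E}$ with slopes $\mu_{1}>\cdots>\mu_{N}$ and ranks $r_{i}$. The defining sections $\sigma_{j}$ lie in $H^{0}(\operatorname{Sym}^{d}\mathcal{E}\otimes L_{j}^{-1})$, so the induced filtration on $\operatorname{Sym}^{d}\mathcal{E}$, combined with the Koszul/syzygy structure of the complete intersection embedding of a general fiber $F$, yields a Noether--Clifford-type inequality $\ell\ge\Psi_{n,d}(\mu_{i},r_{i})$; coupled with Xiao's lemma and $e=\sum_{i}r_{i}\mu_{i}$, this should produce exactly the coefficient $\lambda_{n,d}=24a/(3a+d+1)$, with the rational double point case saturating equality.

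\textbf{Main obstacle.} The hard step is the last one: establishing the sharp Noether-type lower bound on $\ell$ and isolating the correct extremal HN distribution from which the precise coefficient $\lambda_{n,d}$ emerges. This requires combining a Bogomolov--Miyaoka-type control on $\operatorname{Sym}^{d}\mathcal{E}$ with a relative Clifford--Noether inequality for complete intersection curves of type $(d,\ldots,d)$ in $\mathbb{P}^{n}$; producing both in sharp form is the main technical work. A subsidiary but delicate matter is the bookkeeping of exceptional and singular-fiber contributions arising from the resolution of $\phi_{|\mathcal{L}|/B}$ and from degenerations where the $\sigma_{j}$ fail to define a complete intersection; semi-positivity of $f_{*}\omega_{S/B}$ should ensure these only strengthen the inequality, but making this rigorous and pinning down the equality case requires care.
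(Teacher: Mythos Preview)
Your plan has a genuine conceptual gap. You propose to reduce the inequality to a lower bound on $\ell=\sum_i\deg L_i$ in terms of $e=\deg\mathcal{E}$ via the Harder--Narasimhan filtration of $\mathcal{E}$ and a Xiao-type argument. But the very computation you outline (carried out in the paper as Lemma~\ref{exalem}) shows that on the normal model $X\subset\mathbb{P}_B(\mathcal{E})$ one has $K_{\overline f}^{2}=\lambda_{n,d}\,\chi_{\overline f}$ \emph{identically}: both quantities are proportional to the \emph{same} linear form $(n-1)d\cdot\deg\mathcal{E}+(n+1)\sum_i\deg L_i$. So no inequality between your $e$ and $\ell$ is needed or relevant; the ``linear comparison'' is already an identity, and the HN/Xiao machinery is aimed at the wrong target. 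The entire content of Theorem~\ref{Mainthm} lies in the correction terms coming from the birational modification $S\dashrightarrow X$ and from torsion in direct images---precisely what you set aside as a ``subsidiary but delicate matter.''

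The paper's mechanism is quite different and does not use HN filtrations or Clifford/Noether bounds. For $m\gg 0$ (with $e=me'$, $e'=(n-1)d-n-1$) one compares $\mathrm{Sym}^{e}f_*\mathcal{L}$ with $f_*\omega_f^{\otimes m}$ up to a twist $\mathfrak{c}$, recording a torsion length $\mathrm{length}(\mathcal{T})$ and $c=\deg\mathfrak{c}$. On a resolution $\widetilde S\to S$ of the indeterminacies one takes a general smooth curve $C\in|M'+\widetilde f^{*}\mathfrak{a}|$, where $M'$ is the pullback of the tautological class, and lets $C'$ be its image in $W=\mathbb{P}_B(f_*\mathcal{L})$. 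The source of the inequality is the non-negative normalization defect $p_a(C')-g(C)$: computing $2g(C)$ by adjunction on $\widetilde S$ and $2p_a(C')$ via the Koszul resolution of $\mathcal{O}_{C'}$ on $W$ (Lemma~\ref{CC'lem}), and separately expressing $\deg f_*\mathcal{L}$ through $K_f^2$, $\chi_f$, $c$, $\mathrm{length}(\mathcal{T})$ by GRR (Lemma~\ref{f*Llem}), one eliminates $\deg f_*\mathcal{L}$ to obtain
\[
K_f^2=\lambda_{n,d}\,\chi_f+p_1\bigl(p_a(C')-g(C)\bigr)+p_2\,\mathrm{length}(\mathcal{T})+p_3\,EC
\]
with explicitly positive coefficients $p_i$, and the inequality follows.
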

Theorem~\ref{signthm} is obtained as an application of Theorem~\ref{mainthm}.

This paper is organized as follows.
In \S1, we compute some invariants of normal complete intersection surfaces in a projective bundle on a non-singular projective curve.
In particular, there are many examples of fibered surfaces whose general fiber is a complete intersection of $n-1$ hypersurfaces of degree $d$ in $\mathbb{P}^n$ with slope $\lambda_{n,d}$.
In \S2, we give a proof of Theorem~\ref{mainthm} and state a conjecture about complete intersection curve fibrations.
In \S3, we show Theorem~\ref{signthm} as an application of Theorem~\ref{mainthm}.
The proof of Theorem~\ref{signthm} is similar to the argument in \cite{Eno} which gives a Durfee-type inequality for hypersurface surface singularities.

\begin{acknowledgement}
I would like to express special thanks to Prof.\ Kazuhiro Konno and Prof.\ Tadashi Ashikaga for a lot of discussions and supports.
I am grateful to Prof.\ Tomohiro Okuma, Prof.\ Masataka Tomari and Prof.\ Tadashi Tomaru for variable discussions and comments for singularities.
I also thank Doctors Hiroto Akaike, Sho Ejiri and Kohei Kikuta for discussions.
The research is supported by JSPS KAKENHI No.\ 16J00889.
\end{acknowledgement}

\section{Complete intersection surfaces in a projective bundle over a curve}

Let $B$ be a non-singular projective curve of genus $b$, $\mathcal{E}$ a locally free sheaf of rank $n+1$ on $B$ and $\pi_{W}\colon W:=\mathbb{P}_{B}(\mathcal{E}) \to B$ the associated projective bundle on $B$.
Let $\mathcal{H}_i \in |\mathcal{O}_{W}(d)\otimes \pi_{W}^{*}\mathfrak{a_i}|$, $i=1\ldots,n-1$ be hypersurfaces of relative degree $d$ such that $X:=\mathcal{H}_1\cap \cdots \cap \mathcal{H}_{n-1}$ is a normal irreducible surface and $\overline{f}:=\pi_{W}|_{X}\colon X\to B$ the projection.
We consider the following two invariants:
\begin{align*}
K_{\overline{f}}^2&=(K_X-\overline{f}^{*}K_B)^2=K_X^2-8(g-1)(b-1) \\
\chi_{\overline{f}}&=\chi(\mathcal{O}_X)-(g-1)(b-1),
\end{align*}
where $g$ is the genus of a general fiber $F$ of $\overline{f}$.
These invariants can be computed easily as follows.

\begin{lem} \label{exalem}

Let $a_i$ be the degree of $\mathfrak{a}_i$.
Then we have
$$
K_{\overline{f}}^2=((n-1)d-(n+1))(d-1)d^{n-2}\left((n-1)d\mathrm{deg}\mathcal{E}+(n+1)\sum_{i=1}^{n-1}a_i\right)
$$
and
$$
\chi_{\overline{f}}=\frac{1}{24}((3n-2)d-(3n+2))(d-1)d^{n-2}\left((n-1)d\mathrm{deg}\mathcal{E}+(n+1)\sum_{i=1}^{n-1}a_i\right).
$$
In particular, $K_{\overline{f}}^2=\lambda_{n,d}\chi_{\overline{f}}$ holds.
\end{lem}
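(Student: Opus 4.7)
The plan is to reduce both invariants to intersection numbers on $W$ by combining adjunction with the Chow ring structure of the projective bundle. Let $\xi=c_1(\mathcal{O}_W(1))$ and let $F$ denote the class of a fiber of $\pi_W$. Because $B$ is a curve, $c_i(\mathcal{E})=0$ for $i\ge 2$, so the nonzero top intersections on $W$ are governed by $F^2=0$, $\xi^n F=1$, and $\xi^{n+1}=\mathrm{deg}\,\mathcal{E}$. The relative Euler sequence gives $K_W=-(n+1)\xi+(2b-2+\mathrm{deg}\,\mathcal{E})F$, and the class of $X$ in $W$ is $[X]=\prod_{i=1}^{n-1}(d\xi+a_i F)$.

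For $K_{\overline{f}}^2$, adjunction yields $K_X=(K_W+\sum_i\mathcal{H}_i)|_X$, and subtracting $\overline{f}^{*}K_B=(2b-2)F|_X$ gives $K_{\overline{f}}=(\alpha\xi+\beta F)|_X$, where $\alpha=(n-1)d-(n+1)$ and $\beta=\mathrm{deg}\,\mathcal{E}+\sum_i a_i$. The relation $F^2=0$ collapses $\prod_i(d\xi+a_i F)$ to $d^{n-1}\xi^{n-1}+d^{n-2}(\sum_i a_i)\xi^{n-2}F$ and $(\alpha\xi+\beta F)^2$ to $\alpha^2\xi^2+2\alpha\beta\,\xi F$. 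Multiplying in $A^{*}(W)$, applying $\xi^{n+1}=\mathrm{deg}\,\mathcal{E}$ and $\xi^n F=1$, and using the identities $\alpha+2=(n-1)(d-1)$ and $\alpha+2d=(n+1)(d-1)$, the factor $(d-1)d^{n-2}\alpha$ pulls out and the stated formula for $K_{\overline{f}}^2$ follows.

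For $\chi_{\overline{f}}$, the plan is to compute $\chi(\mathcal{O}_X)$ by Hirzebruch--Riemann--Roch combined with the Koszul resolution of $i_{*}\mathcal{O}_X$ on $W$, which gives
$$
\chi(\mathcal{O}_X)=\int_W\Bigl(\prod_{i=1}^{n-1}(1-e^{-\mathcal{H}_i})\Bigr)\mathrm{td}(T_W).
$$
Working modulo $F^2=0$, the factorization $\mathrm{td}(T_W)=\mathrm{td}(\pi_W^{*}T_B)\cdot\mathrm{td}(T_{W/B})$ and the relative Euler sequence decompose $\mathrm{td}(T_W)$ as $\tau(\xi)^{n+1}$ plus a first-order-in-$F$ correction involving $\mathrm{deg}\,\mathcal{E}$ and $b$, where $\tau(x)=x/(1-e^{-x})$; similarly the Koszul product collapses to $(1-e^{-d\xi})^{n-1}+(\sum_i a_i)\,F\,e^{-d\xi}(1-e^{-d\xi})^{n-2}$. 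Extracting the degree-$(n+1)$ component in $(\xi,F)$, integrating via $\xi^{n+1}=\mathrm{deg}\,\mathcal{E}$ and $\xi^n F=1$, and then subtracting $(g-1)(b-1)$ with $2g-2=\alpha d^{n-1}$, yields the formula for $\chi_{\overline{f}}$. The equality $K_{\overline{f}}^2=\lambda_{n,d}\chi_{\overline{f}}$ is then immediate, since the common factor $(d-1)d^{n-2}\bigl((n-1)d\,\mathrm{deg}\,\mathcal{E}+(n+1)\sum_i a_i\bigr)$ cancels in the ratio.

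The main obstacle is the $\chi$ computation: the Todd power series must be tracked carefully through an $(n+1)$-fold intersection, and the coefficient $\tfrac{1}{24}\bigl((3n-2)d-(3n+2)\bigr)$ only emerges after combining contributions from $\tau(\xi)^{n+1}$ with the $F$-linear correction, together with the cancellation against the $b$-dependent pieces coming from $\mathrm{td}(\pi_W^{*}T_B)$ and $(g-1)(b-1)$. An alternative is to invoke a Bertini-type argument to reduce to smooth $X$ and then apply Noether's formula $12\chi(\mathcal{O}_X)=K_X^2+c_2(X)$, but this still requires computing $c_2(X)$ through the normal bundle sequence and offers no essential simplification.
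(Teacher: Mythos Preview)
Your treatment of $K_{\overline{f}}^2$ matches the paper's exactly: both use adjunction on $X\subset W$, expand in the Chow basis $\xi,F$ with $\xi^{n+1}=\deg\mathcal{E}$, $\xi^nF=1$, $F^2=0$, and simplify via $e'+2=(n-1)(d-1)$, $e'+2d=(n+1)(d-1)$.

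For $\chi_{\overline{f}}$ the routes diverge. The paper does not work with Todd series on $W$. Instead it uses the Koszul resolution to write
\[
\chi(\mathcal{O}_X)=\sum_{k=0}^{n-1}(-1)^k\sum_{1\le j_1<\cdots<j_k\le n-1}\chi\bigl(\mathcal{O}_W(-kd)\otimes\pi_W^{*}\mathcal{O}_B(-\mathfrak{a}_{j_1}-\cdots-\mathfrak{a}_{j_k})\bigr),
\]
evaluates each summand by Riemann--Roch on $B$ as $(-1)^{n+1}\bigl(\binom{kd-1}{n}(b-1)+\binom{kd}{n+1}\deg\mathcal{E}+\binom{kd-1}{n}(a_{j_1}+\cdots+a_{j_k})\bigr)$, and then organizes the result as $A_0(b-1)+A_1\deg\mathcal{E}+A_2\sum_ia_i$ with
\[
A_j=\text{an alternating sum of }\binom{n-1}{k}\text{ or }\binom{n-2}{k-1}\text{ against binomials in }kd.
\]
These are evaluated via the elementary recursion for $\sigma_{m,l}=\sum_k(-1)^k\binom{m}{k}k^l$, which vanishes for $l<m$ and has closed forms for $l=m,m+1,m+2$; this is where the constant $\tfrac{1}{24}\bigl((3n-2)d-(3n+2)\bigr)$ drops out cleanly. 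Your HRR approach is equally valid and more conceptual, but the bookkeeping you flag as ``the main obstacle'' is genuinely the whole difficulty: extracting the degree-$(n+1)$ piece of $\tau(\xi)^{n+1}(1-e^{-d\xi})^{n-1}$ and its $F$-linear companions amounts to the same combinatorial identities the paper proves, just hidden inside power-series coefficients rather than isolated as the $\sigma_{m,l}$ recursion. The paper's route buys an explicit, self-contained endgame; yours buys a uniform packaging but still owes the coefficient extraction, which you have outlined but not carried out.
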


\begin{proof}
Put $e':=(n-1)d-n-1$.
Let $T_W$ and $\Gamma_W$ respectively denote the numerical equivalence classes of the tautological line bundle $\mathcal{O}_{W}(1)$  and the fiber of $\pi_{W}$.
Since
$$
K_{W}\sim -(n+1)\mathcal{O}_W(1)+\pi_{W}^{*}(K_B+\wedge^{n+1}\mathcal{E})
$$
and
$$
K_X=(K_W+\mathcal{H}_1+\cdots+\mathcal{H}_{n-1})|_{X},
$$
we have 
\begin{align*}
K_X^2&=(K_W+\mathcal{H}_1+\cdots+\mathcal{H}_{n-1})^2\mathcal{H}_1\cdots \mathcal{H}_{n-1} \\
&=\left(e'T_W+(\sum_{i=1}^{n-1}a_i+2b-2+\mathrm{deg}\mathcal{E})\Gamma_W\right)^2(dT_W+a_1\Gamma_W)\cdots (dT_W+a_{n-1}\Gamma_W) \\
&=e'd^{n-1}(e'+2)\mathrm{deg}\mathcal{E}+e'd^{n-2}(e'+2d)\sum_{i=1}^{n-1}a_i+4e'd^{n-1}(b-1),
\end{align*}
where the last equality follows from $T_{W}^{n+1}=\mathrm{deg}\mathcal{E}$, $T_{W}^{n}\Gamma_{W}=1$ and $\Gamma_{W}^2=0$.
Thus we have
$$
K_{\overline{f}}^2=K_X^2-4e'd^{n-1}(b-1)=e'd^{n-1}(e'+2)\mathrm{deg}\mathcal{E}+e'd^{n-2}(e'+2d)\sum_{i=1}^{n-1}a_i.
$$

Next, we compute $\chi_{\overline{f}}$.
We put
\begin{align*}
\chi(d,a)&:=\chi(\mathcal{O}_{W}(-d)\otimes \pi_{W}^{*}\mathcal{O}_{B}(-\mathfrak{a})) \\
&=(-1)^{n+1}\left(\binom{d-1}{n}(b-1)+\binom{d}{n+1}\mathrm{deg}\mathcal{E}+\binom{d-1}{n}a\right),
\end{align*}
where $a$ is the degree of a divisor $\mathfrak{a}$ on $B$.
From the exact sequence
$$
0\to \mathcal{O}_{\mathcal{H}_1\cap \cdots \cap\mathcal{H}_{i-1}}(-\sum_{j=1}^{l}\mathcal{H}_{n_j}-\mathcal{H}_i)\to \mathcal{O}_{\mathcal{H}_1\cap \cdots \cap\mathcal{H}_{i-1}}(-\sum_{j=1}^{l}\mathcal{H}_{n_j}) \to \mathcal{O}_{\mathcal{H}_1\cap \cdots \cap\mathcal{H}_{i}}(-\sum_{j=1}^{l}\mathcal{H}_{n_j}) \to 0,
$$
we can show by the induction on $i$ that
$$
\chi(\mathcal{O}_{\mathcal{H}_1\cap\cdots \cap\mathcal{H}_i}(-\sum_{j=1}^{l}\mathcal{H}_{n_j}))=\sum_{k=0}^{i}(-1)^{k}\sum_{1\le j_1< \cdots < j_k\le i}\chi((k+l)d,a_{j_1}+\cdots+a_{j_k}+a_{n_1}+\cdots+a_{n_l}).
$$
Putting $i=n-1$ and $l=0$, we have
\begin{align*}
\chi(\mathcal{O}_X)&=\sum_{k=0}^{n-1}(-1)^k\sum_{1\le j_1<\cdots<j_k\le n-1}\chi(kd,a_{j_1}+\cdots+a_{j_k}) \\
&=A_0(b-1)+A_1\mathrm{deg}\mathcal{E}+A_2\sum_{i=1}^{n-1}a_i,
\end{align*}
where 
\begin{align*}
A_0:=&\sum_{k=0}^{n-1}(-1)^{n+k+1}\binom{n-1}{k}\binom{kd-1}{n}, \\
A_1:=&\sum_{k=0}^{n-1}(-1)^{n+k+1}\binom{n-1}{k}\binom{kd}{n+1}
\end{align*}
and
$$
A_2:=\sum_{k=0}^{n-1}(-1)^{n+k+1}\binom{n-2}{k-1}\binom{kd-1}{n}.
$$
For integers $m\ge 1$ and $l\ge 0$, put
$$
\sigma_{m,l}:=\sum_{k=0}^{m}(-1)^k\binom{m}{k}k^l.
$$
Thus we can see that $\sigma_{m,l}=0$ for $m>l$ and
\begin{align*}
\sigma_{m,m}&=(-1)^m m!, \\
\sigma_{m,m+1}&=\frac{(-1)^m}{2}m(m+1) m!, \\
\sigma_{m,m+2}&=\frac{(-1)^m}{24}m(m+1)(m+2)(3m+1)m!
\end{align*}
from the properties $\sigma_{m,l}=m(\sigma_{m,l-1}-\sigma_{m-1,l-1})$ and $\sigma_{m,0}=0$.
Thus we can compute $A_0$, $A_1$ and $A_2$ as follows.
\begin{align*}
A_0&=\frac{(-1)^{n+1}}{n!}d^{n-1}\left(d\sigma_{n-1,n}-\frac{n(n+1)}{2}\sigma_{n-1,n-1}\right) \\
&=\frac{d^{n-1}e'}{2}. \\
A_1&=\frac{(-1)^{n+1}}{(n+1)!}d^{n-1}\left(d^2\sigma_{n-1,n+1}-\frac{n(n+1)}{2}d\sigma_{n-1,n}+\frac{n(n+1)(n-1)(3n+2)}{24}\sigma_{n-1,n-1}\right) \\
&=\frac{(3n-2)d-(3n+2)}{24}d^{n-1}(d-1)(n-1). \\
A_2&=\frac{(-1)^{n}}{n!}d^{n-2}\left(d^2\sigma_{n-2,n}+\frac{nd}{2}(2d-n(n+1))\sigma_{n-2,n-1}\right. \\
&\ \ \ + \left.\frac{n(n-1)}{24}(12d^2-12(n+1)d+(n+1)(3n+2))\sigma_{n-2,n-2}\right) \\
&=\frac{(3n-2)d-(3n+2)}{24}d^{n-2}(d-1)(n+1).
\end{align*}
Thus we have
$$
\chi_{\overline{f}}=\chi(\mathcal{O}_X)-\frac{d^{n-1}e'}{2}(b-1)=\lambda_{n,d}^{-1}K_{\overline{f}}^2.
$$
\end{proof}

\begin{rem}
Taking $\mathcal{H}_i$'s sufficiently general, the surface $X$ is non-singular by Bertini's theorem.
Thus the fibration $f=\overline{f}\colon X\to B$ is a fibered surface whose general fiber is a complete intersection of $n-1$ hypersurfaces of degree $d$ in $\mathbb{P}^n$ with slope $\lambda_{n,d}$.
\end{rem}

\section{Slope inequality for complete intersection curve fibrations}

In this section, we prove the following theorem:

\begin{thm} \label{Mainthm}
Let $f\colon S\to B$ be a relatively minimal fibration whose general fiber $F$ is a complete intersection of $n-1$ hypersurfaces of degree $d$ in $\mathbb{P}^n$.
Assume the following conditions:

\smallskip

\noindent
$(1)$ There is a line bundle $\mathcal{L}$ on $S$ such that the complete linear system $|\mathcal{L}|_{F}|$ of the restriction of $\mathcal{L}$ to the general fiber $F$ defines the embedding $F\subset \mathbb{P}^n$ as complete intersection.

\smallskip

\noindent
$(2)$ Let $X$ be the image of the rational map $\phi_{|\mathcal{L}|/B}\colon S\dasharrow \mathbb{P}_B(f_{*}\mathcal{L})$.
Then any fiber $X_t=X\cap \mathbb{P}(f_{*}\mathcal{L}\otimes \mathbb{C}(t))$ over $t\in B$ is a complete intersection of $n-1$ hypersurfaces of degree $d$ in $\mathbb{P}(f_{*}\mathcal{L}\otimes \mathbb{C}(t))=\mathbb{P}^n$.

\smallskip

\noindent
Then we have
$$
K_f^2\ge \lambda_{n,d}\chi_f,
$$
where the rational number $\lambda_{n,d}$ is defined by
$$
\lambda_{n,d}=\frac{24(n-1)d-24(n+1)}{(3n-2)d-(3n+2)}.
$$

\end{thm}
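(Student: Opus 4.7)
The plan is to play $f$ off against the relative complete intersection model $\overline{f}: X \to B$ that conditions (1) and (2) furnish, and for which Lemma~\ref{exalem} already yields the equality $K_{\overline{f}}^2 = \lambda_{n,d}\,\chi_{\overline{f}}$. First, I would resolve the indeterminacy of $\phi_{|\mathcal{L}|/B}: S \dashrightarrow \mathbb{P}_B(f_*\mathcal{L})$ by a sequence $\mu: \widetilde{S} \to S$ of $N$ point blow-ups so that the induced map $\widetilde{\phi}: \widetilde{S} \to X$ is a morphism. Condition~(2) forces every scheme-theoretic fiber of $X \to B$ to be a complete intersection of the prescribed type, so $\overline{f}: X \to B$ fits the framework of Section~1 with $\mathcal{E} = f_*\mathcal{L}$ and defining equations of bi-degree $(d, a_i)$, and Lemma~\ref{exalem} applies verbatim.

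Next, I would translate the change of invariants under the two birational contractions. The blow-up step gives $K_{\widetilde{f}}^2 = K_f^2 - N$ and $\chi_{\widetilde{f}} = \chi_f$. Writing $K_{\widetilde{S}/B} = \widetilde{\phi}^{*}K_{\overline{f}} + \Delta$ for the discrepancy divisor of the resolution $\widetilde{\phi}$, orthogonality of $\widetilde{\phi}^{*}K_{\overline{f}}$ against the exceptional curves yields $K_{\widetilde{f}}^2 = K_{\overline{f}}^2 + \Delta^2$, while the Leray spectral sequence (using $\widetilde{\phi}_{*}\mathcal{O}_{\widetilde{S}} = \mathcal{O}_X$ by normality of $X$) gives $\chi_{\widetilde{f}} = \chi_{\overline{f}} - \sum_{i} p_g(X, x_i)$, summed over the singular points of $X$. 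Combining these identities with Lemma~\ref{exalem} for $\overline{f}$, the desired inequality becomes
$$
K_f^2 - \lambda_{n,d}\,\chi_f \;=\; N + \Delta^2 + \lambda_{n,d}\sum_{i} p_g(X, x_i),
$$
and the whole problem reduces to establishing non-negativity of the right-hand side.

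The main obstacle lies in this last step. By the negative-definiteness of the intersection form on exceptional curves one has $\Delta^2 \le 0$, so the positive contributions $N$ (blow-up count) and $\lambda_{n,d}\sum p_g(X, x_i)$ (singularity geometric genera) must absorb the potentially sizeable $-\Delta^2$. I would attack this locally over each singular point $x_i \in X$, decomposing $\Delta = \sum_i \Delta_i$ and $N = \sum_i N_i$ according to the support of $\widetilde{\phi}$ above $x_i$, and then bounding $-\Delta_i^2 \le N_i + \lambda_{n,d}\,p_g(X, x_i)$ point by point. The relative minimality of $f$ ensures that $K_f$ is $f$-nef and hence constrains how $\widetilde{S}$ can degenerate above each $x_i$; combined with the precise arithmetic structure of $\lambda_{n,d}$ and standard inequalities for the geometric genus of a normal surface singularity in terms of its fundamental cycle, this should supply the required local inequality and close the argument.
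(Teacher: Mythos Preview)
Your reduction to $K_f^2-\lambda_{n,d}\chi_f=N+\Delta^2+\lambda_{n,d}\sum_i p_g(X,x_i)$ is fine as far as it goes, but the last step is not a technical mop-up: it is circular. Take the simplest case $N=0$, so that $S\to X$ is itself a resolution; passing to the minimal resolution, the discrepancy $\Delta_i$ over a Gorenstein singular point $x_i$ coincides with the canonical cycle $K_i$, and your local inequality reads $K_i^2+\lambda_{n,d}\,p_g(X,x_i)\ge 0$. But that is precisely inequality~\eqref{K2pgineq} (with $\lambda_{n,d}$ in place of its limit $24(n-1)/(3n-2)$), which the paper \emph{derives from} Theorem~\ref{Mainthm} in the proof of Theorem~\ref{Signthm}. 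It is a form of Durfee's weak conjecture for complete intersection surface singularities, and there is no ``standard inequality for the geometric genus in terms of the fundamental cycle'' that delivers it: the Artin--Laufer type bounds give \emph{upper} estimates for $p_g$, not the lower bound against $-K^2$ that you need. So the proposal assumes what the paper is ultimately trying to prove.

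The paper's argument is organised quite differently and never passes through a local Durfee-type statement. For $m\gg 0$ it compares $|mK_f|$ with $|e\mathcal L|$ (where $e=me'$) via an elementary transformation of projective bundles, picks a general smooth curve $C\in|M'+\widetilde f^{*}\mathfrak a|$ on $\widetilde S$ with image $C'\subset W$, and proves two identities (Lemmas~\ref{CC'lem} and~\ref{f*Llem}) relating $K_f^2$, $\chi_f$, $\deg f_*\mathcal L$, the length $\ell$ of a torsion sheaf, and $p_a(C')-g(C)$. Eliminating $\deg f_*\mathcal L$ yields an \emph{equality}
\[
K_f^2=\lambda_{n,d}\chi_f+p_1\bigl(p_a(C')-g(C)\bigr)+p_2\,\ell+p_3\,EC
\]
with explicitly positive coefficients $p_i$, and each correction term is manifestly $\ge 0$ (normalisation lowers arithmetic genus; $\ell$ is a length; $E$ is effective and $C$ nef). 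No singularity-theoretic input is needed.

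A secondary gap: Lemma~\ref{exalem} is stated for $X$ a \emph{global} complete intersection $\mathcal H_1\cap\cdots\cap\mathcal H_{n-1}$ in $W$, whereas condition~(2) only asserts this fiberwise. The paper bridges this with Lemma~\ref{zeroseq}, realising $X$ as the zero locus of a section of a rank-$(n-1)$ bundle $V$ on $W$ that restricts to $\mathcal O(d)^{\oplus(n-1)}$ on every fibre; the relevant computations then depend only on the Chern classes of $V$, so the conclusion of Lemma~\ref{exalem} does persist, but it cannot simply be quoted.
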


Let $f\colon S\to B$ be a relatively minimal fibration of genus $g$ whose general fiber is a complete intersection $F=H_1\cap\cdots \cap H_{n-1}\subset \mathbb{P}^n$, $H_i\in |\mathcal{O}_{\mathbb{P}^n}(d)|$ and $\mathcal{L}$ a line bundle on $S$ satisfying the conditions $(1)$, $(2)$ in Theorem~\ref{Mainthm}.
Put $e':=(n-1)d-n-1$.
Then we have $e'\mathcal{L}|_F\sim K_F$ from the adjunction formula.
Thus there exists an $f$-vertical divisor $D$ on $S$ such that $e'\mathcal{L}+D\sim K_f$.
Let $m$ be a positive integer and $e:=me'$.
Then there is a natural injection $f_{*}\mathcal{L}^{\otimes e}\to f_{*}\omega_f^{\otimes m}(\mathfrak{b})$ whose cokernel is a torsion sheaf by taking a sufficiently effective divisor $\mathfrak{b}$ on $B$.
Let $\mathcal{K}$ be the kernel of the natural homomorphism $\mathrm{Sym}^{e}f_{*}\mathcal{L}\to f_{*}\mathcal{L}^{\otimes e}$.
Then we have an exact sequence
\begin{equation} \label{Kex}
0\to \mathcal{K}\to \mathrm{Sym}^{e}f_{*}\mathcal{L}\to f_{*}\omega_f^{\otimes m}(-\mathfrak{c})\to \mathcal{T}\to 0,
\end{equation}
where the divisor $\mathfrak{c}$ is determined as follows.
Let $\mathfrak{b}'$ be a maximal divisor on $B$ satisfying 
$$
\mathrm{Im}(\mathrm{Sym}^{e}f_{*}\mathcal{L}\to f_{*}\mathcal{L}^{\otimes e}\to f_{*}\omega_f^{\otimes m}(\mathfrak{b}))\subset f_{*}\omega_f^{\otimes m}(\mathfrak{b}-\mathfrak{b}').
$$
Then we put $\mathfrak{c}:=\mathfrak{b}'-\mathfrak{b}$ (not necessarily effective).
Thus the elementary transformation 
$$
P':=\mathbb{P}_B(\mathrm{Sym}^{e}f_{*}\mathcal{L}/\mathcal{K})\xleftarrow{\tau'} \widetilde{P}\xrightarrow{\tau} P:=\mathbb{P}_{B}(f_{*}\omega_f^{\otimes m})
$$
occurs with
$$
\tau^{*}\mathcal{O}_{P}(1)-\tau'^{*}\mathcal{O}_{P'}(1)=\widetilde{\pi}^{*}\mathfrak{c}+E_{\tau},
$$
where $\widetilde{\pi}\colon \widetilde{P}\to B$ is the projection and $E_{\tau}$ is an effective $\tau$-exceptional divisor.
Put $\mathbf{P}:=\mathbb{P}_B(\mathrm{Sym}^{e}f_{*}\mathcal{L})$ and $W:=\mathbb{P}_{B}(f_{*}\mathcal{L})$.
Then there are the natural embedding $P'\subset \mathbf{P}$ as a relative plane and the relative Veronese embedding $W\subset \mathbf{P}$ of degree $e$.
Clearly, two rational maps $S\dasharrow W\subset \mathbf{P}$ and $S\dasharrow P\dasharrow P'\subset \mathbf{P}$ are coincide.
Let $\rho\colon \widetilde{S}\to S$ be a resolution of indeterminacy of two rational maps $S\dasharrow \mathbf{P}$ and $S\dasharrow \widetilde{P}$ and $\mathbf{\Phi}\colon \widetilde{S}\to \mathbf{P}$ and $\widetilde{\Phi}\colon \widetilde{S}\to \widetilde{P}$ the induced morphisms.
Put $T:=\tau^{*}\mathcal{O}_P(1)$, $T':=\tau'^{*}\mathcal{O}_{P'}(1)$, $\mathbf{T}:=\mathcal{O}_{\mathbf{P}}(1)$ and $T_{W}:=\mathcal{O}_{W}(1)$.
Then we have $\mathbf{T}|_{P'}=T'$ and $\mathbf{T}|_{W}=eT_{W}$.
Let $F$, $\widetilde{F}$, $\Gamma$, $\Gamma'$, $\widetilde{\Gamma}$, $\mathbf{\Gamma}$ and $\Gamma_{W}$ respectively be the numerical equivalence classes of the fiber of the natural projections $f\colon S\to B$, $\widetilde{f}\colon \widetilde{S}\to B$, $\pi\colon P\to B$, $\pi'\colon P'\to B$, $\widetilde{\pi}\colon \widetilde{P}\to B$, $\mathbf{\Pi}\colon \mathbf{P}\to B$ and $\pi_{W}\colon W\to B$.
Then we can write $T-T'\equiv c\widetilde{\Gamma}+E_{\tau}$, where the symbol $\equiv$ means the numerical equivalence and $c:=\mathrm{deg}\mathfrak{c}$.
Put $M:=\widetilde{\Phi}^{*}T$ and $M':=\widetilde{\Phi}^{*}T'=\mathbf{\Phi}^{*}\mathbf{T}$.
Then we have 
\begin{equation} \label{KfMeq}
m\rho^{*}K_f=M+Z
\end{equation}
for some effective $\widetilde{f}$-vertical divisor $Z$.
We can also write $K_{\widetilde{f}}=\rho^{*}K_f+E$ for some effective $\rho$-exceptional divisor $E$.
Put $X:=\mathbf{\Phi}(\widetilde{S})$, which is nothing but the image of $\phi_{|\mathcal{L}|/B}\colon S\dasharrow W$.
Then we can write as cycles in $\mathbf{P}$,
$$
W\equiv e^n\mathbf{T}^{N-n}+\alpha\mathbf{T}^{N-n-1}\mathbf{\Gamma}
$$
and
$$
X\equiv d^{n-1}e\mathbf{T}^{N-1}+\beta\mathbf{T}^{N-2}\mathbf{\Gamma}
$$
for some integers $\alpha$ and $\beta$,
where $N:=\binom{e+n}{n}-1$.
Thus we can compute
\begin{equation} \label{alphaeq}
\left(\mathbf{T}|_{W}\right)^{n+1}=\mathbf{T}^{n+1}W=e^{n}\mathrm{deg}(\mathrm{Sym}^{e}f_{*}\mathcal{L})+\alpha
\end{equation}
and
\begin{equation} \label{betaeq}
M'^2=\mathbf{T}^{2}X=d^{n-1}e\mathrm{deg}(\mathrm{Sym}^{e}f_{*}\mathcal{L})+\beta.
\end{equation}
On the other hand, we have
\begin{equation} \label{tautoeq}
\left(\mathbf{T}|_{W}\right)^{n+1}=\left(eT_{W}\right)^{n+1}=e^{n+1}\mathrm{deg}f_{*}\mathcal{L}.
\end{equation}
We can regard $X$ as a cycle in $W$, which is denoted by $[X]_{W}$.
Then we can write
$$
[X]_{W}\equiv d^{n-1}T_{W}^{n-1}+\beta_W T_{W}^{n-2}\Gamma_{W}=\left(\frac{d^{n-1}}{e^{n-1}}\mathbf{T}^{n-1}+\frac{\beta_W}{e^{n-2}}\mathbf{T}^{n-2}\mathbf{\Gamma}\right)|_{W}
$$
for some integer $\beta_W$.
The numerical equivalence
$$
\left(\frac{d^{n-1}}{e^{n-1}}\mathbf{T}^{n-1}+\frac{\beta_W}{e^{n-2}}\mathbf{T}^{n-2}\mathbf{\Gamma}\right)W\equiv X
$$
induces 
\begin{equation} \label{alphabetaeq}
\beta=e^{2}\beta_W+\frac{d^{n-1}}{e^{n-1}}\alpha.
\end{equation}
Combining \eqref{alphabetaeq} with \eqref{alphaeq}, \eqref{betaeq} and \eqref{tautoeq}, we have
\begin{align}
\beta_W&=\frac{1}{e^{2}}\beta-\frac{d^{n-1}}{e^{n+1}}\alpha \nonumber \\
&=\frac{1}{e^{2}}M'^2-\frac{d^{n-1}}{e^{n+1}}\left(\mathbf{T}|_{W}\right)^{n+1} \nonumber \\
&=\frac{1}{e^2}M'^{2}-d^{n-1}\mathrm{deg}f_{*}\mathcal{L}. \label{beta''eq}
\end{align}

From the condition $(2)$ in Theorem~\ref{Mainthm}, we can see that $X$ is the zeros of a section of a vector bundle $V$ of rank $n-1$ on $W$:

\begin{lem} \label{zeroseq}
There exist a vector bundle $V$ on $W$ of rank $n-1$ and a section $s\in H^{0}(W,V)$ such that $X$ is the zeros of $s$ and $V|_{\pi_{W}^{-1}(t)}\simeq \mathcal{O}_{\pi_{W}^{-1}(t)}(d)^{\oplus n-1}$ for any $t\in B$.
\end{lem}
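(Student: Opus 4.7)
The plan is to take
$$
\mathcal{F}:=(\pi_W)_{*}\bigl(\mathcal{I}_X\otimes \mathcal{O}_W(d)\bigr),\qquad V:=\pi_W^{*}\mathcal{F}^{\vee}\otimes \mathcal{O}_W(d),
$$
and to manufacture $s$ from the evaluation $\alpha\colon \pi_W^{*}\mathcal{F}\to \mathcal{I}_X\otimes \mathcal{O}_W(d)$ by twisting with $\mathcal{O}_W(-d)$, composing with $\mathcal{I}_X\hookrightarrow \mathcal{O}_W$, and using the adjunction $\mathrm{Hom}(V^{\vee},\mathcal{O}_W)=H^{0}(W,V)$. Granting that $\mathcal{F}$ is a rank $n-1$ vector bundle compatible with base change and that $\alpha$ is surjective, the fiberwise identification $V|_{\pi_W^{-1}(t)}\simeq \mathcal{O}_{\pi_W^{-1}(t)}(d)^{\oplus(n-1)}$ is automatic from $\pi_W^{*}\mathcal{F}^{\vee}|_{\pi_W^{-1}(t)}\simeq \mathcal{O}_{\pi_W^{-1}(t)}^{\oplus(n-1)}$, and $Z(s)=X$ holds because the zero scheme of a section of a vector bundle is cut out by the image of the dual map $V^{\vee}\to \mathcal{O}_W$, which in our construction is exactly $\mathcal{I}_X$.

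The first task is to establish these properties of $\mathcal{F}$ from condition $(2)$. For every $t\in B$, the fiber $X_t\subset W_t\cong \mathbb{P}^n$ is a complete intersection of $n-1$ hypersurfaces of degree $d$, so its defining sections form a regular sequence and yield a Koszul resolution
$$
0\to \mathcal{O}_{W_t}(-(n-1)d)\to \cdots \to \binom{n-1}{2}\mathcal{O}_{W_t}(-2d)\to (n-1)\mathcal{O}_{W_t}(-d)\to \mathcal{I}_{X_t}\to 0.
$$
Twisting by $\mathcal{O}_{W_t}(d)$ and truncating to the right-most piece $(n-1)\mathcal{O}_{W_t}\to \mathcal{I}_{X_t}(d)\to 0$ shows both that $\mathcal{I}_{X_t}(d)$ is globally generated and that $h^{0}(W_t,\mathcal{I}_{X_t}(d))=n-1$ independently of $t$ (since all intermediate twists $\mathcal{O}_{W_t}(-kd)$ with $1\le k\le n-2$ have no $H^{0}$ on $\mathbb{P}^n$). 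The family $X\to B$ is flat (each fiber is a Cohen--Macaulay complete intersection of codimension $n-1$ in the smooth fibration $W\to B$, so miracle flatness applies), hence so is $\mathcal{I}_X(d)$, and cohomology and base change then yield that $\mathcal{F}$ is locally free of rank $n-1$ with $\mathcal{F}\otimes \kappa(t)\simeq H^{0}(W_t,\mathcal{I}_{X_t}(d))$ for every $t$.

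The remaining steps are formal. Base-change compatibility identifies the restriction of $\alpha$ to $W_t$ with the tautological evaluation $H^{0}(W_t,\mathcal{I}_{X_t}(d))\otimes \mathcal{O}_{W_t}\to \mathcal{I}_{X_t}(d)$, which is surjective by global generation; applying Nakayama's lemma to the stalks of $\mathrm{coker}(\alpha)$ at points over $t$ (with the ideal $\mathfrak{m}_t\mathcal{O}_{W,w}\subset \mathfrak{m}_w$) upgrades this to surjectivity of $\alpha$ on all of $W$. Twisting $\alpha$ by $\mathcal{O}_W(-d)$ and postcomposing with $\mathcal{I}_X\hookrightarrow \mathcal{O}_W$ produces a map $V^{\vee}=\pi_W^{*}\mathcal{F}\otimes \mathcal{O}_W(-d)\to \mathcal{O}_W$ whose image is $\mathcal{I}_X$; by the adjunction above, this datum is a section $s\in H^{0}(W,V)$, and $Z(s)=X$ as closed subschemes.

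The main obstacle is the constancy of $h^{0}(\mathcal{I}_{X_t}(d))$ across \emph{every} $t\in B$: a degenerate fiber could \emph{a priori} acquire additional degree-$d$ polynomials vanishing on it, causing $\mathcal{F}$ to jump in rank and destroying local freeness. Condition $(2)$ is precisely what rules this out by forcing every $X_t$ to be a genuine complete intersection, so that the Koszul computation is uniform in $t$ and the cohomology-and-base-change machinery applies cleanly.
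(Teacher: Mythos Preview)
Your proof is correct and follows essentially the same route as the paper: your sheaf $\mathcal{F}=(\pi_W)_*\mathcal{I}_X(d)$ coincides with the paper's $\mathcal{K}_d=\ker\bigl(\pi_{W*}\mathcal{O}_W(d)\to \pi_{W*}\mathcal{O}_X(d)\bigr)$, and both constructions take $V=\mathcal{O}_W(d)\otimes \pi_W^{*}\mathcal{K}_d^{\vee}$ with $s$ dual to the evaluation map. The paper's argument is terser---it simply asserts that condition~$(2)$ forces $X=\mathrm{Bs}(\mathcal{K}_d)$---whereas you supply the details (constancy of $h^0(\mathcal{I}_{X_t}(d))$ via the fiberwise Koszul resolution, flatness, cohomology and base change, and surjectivity of $\alpha$) that the paper leaves implicit.
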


\begin{proof}
Let $\mathcal{K}_d$ be the kernel of the natural homomorphism 
$$
\mathrm{Sym}^df_{*}\mathcal{L}=\pi_{W*}\mathcal{O}_{W}(d)\to \pi_{W*}\mathcal{O}_{X}(d)
$$
and regard it as a relative linear system of $\pi_W$.
Thus by the condition $(2)$ in Theorem~\ref{Mainthm}, $X$ coincides with the relative base locus $\mathrm{Bs}(\mathcal{K}_d)$, that is, $X$ is correspond to the ideal sheaf defined by the image of the homomorphism $ev\colon \pi_{W}^{*}\mathcal{K}_d\otimes \mathcal{O}_{W}(-d)\to \mathcal{O}_W$.
Hence $X$ is the zeros of the global section $s$ of $\mathcal{O}_W(d)\otimes \pi_W^{*}\mathcal{K}_d^{*}$ corresponding to the dual of $ev$. 
The vector bundle $V:=\mathcal{O}_W(d)\otimes \pi_W^{*}\mathcal{K}_d^{*}$ is a desired one.
\end{proof}
By the splitting principle, we may assume that $c(V)=\prod_{i=1}^{n-1}(1+\rho_i)$, $\rho_i\equiv dT_W+a_i\Gamma_W$, where $c(V)=\sum_{i}c_i(V)$ is the total Chern class of $V$ and then $\mathrm{deg}\mathcal{K}_d=-\sum_{i=1}^{n-1}a_i$.
Since $[X]_W=c_{n-1}(V)=\prod_{i=1}^{n-1}\rho_i$, we have
\begin{equation} \label{beta''aieq}
\beta_W=d^{n-2}\sum_{i=1}^{n-1}a_i=-d^{n-2}\mathrm{deg}\mathcal{K}_d.
\end{equation}

\begin{rem}
For example, if $X$ is a complete intersection $X=\mathcal{H}_1\cap \cdots \cap \mathcal{H}_{n-1}$, $\mathcal{H}_i\in |\mathcal{O}_W(d)\otimes \pi_W^{*}\mathfrak{a}_i|$, then the vector bundle $V$ is the direct sum $\bigoplus_{i=1}^{n-1}\left(\mathcal{O}_W(d)\otimes \pi_W^{*}\mathfrak{a}_i\right)$ and $a_i=\mathrm{deg}\mathfrak{a}_i$.
\end{rem}

Since $M-M'\equiv c\widetilde{F}+\widetilde{\Phi}^{*}E_{\tau}$ and $d^{n-1}e'=2g-2$, we get
$$
M'^{2}=M^{2}-2d^{n-1}ec+(\widetilde{\Phi}^{*}E_{\tau})^{2}.
$$
We can compute $(\widetilde{\Phi}^{*}E_{\tau})^{2}$ as follows.
\begin{align*}
(\widetilde{\Phi}^{*}E_{\tau})^{2}&=(M-M'-c\widetilde{F})\widetilde{\Phi}^{*}E_{\tau} \\
&=-M'\widetilde{\Phi}^{*}E_{\tau} \\
&=-T'E_{\tau}\widetilde{\Phi}(\widetilde{S}) \\
&=-d^{n-1}eT'^{n'}E_{\tau} \\
&=-d^{n-1}eT'^{n'}(T-T'-c\widetilde{\Gamma}) \\
&=-d^{n-1}e((T-c\widetilde{\Gamma}-E_{\tau})^{n'}T-T'^{n'+1}-c) \\
&=-d^{n-1}e((T^{n'}-n'cT^{n'-1}\widetilde{\Gamma})T-T'^{n'+1}-c) \\
&=-d^{n-1}e(T^{n'+1}-T'^{n'+1}-(n'+1)c) \\
&=-d^{n-1}e\ell, \\
\end{align*}
where $n':=(2m-1)(g-1)-1$ and $\ell:=\mathrm{length}(\mathcal{T})$.
Thus we have
\begin{equation} \label{MM'eq}
M'^2=M^2-d^{n-1}e(2c+\ell).
\end{equation}
The complete linear system $|M'+\widetilde{f}^{*}\mathfrak{a}|$ is free from base points for a sufficiently ample divisor $\mathfrak{a}$ on $B$.
Thus by Bertini's Theorem, we can take a smooth general member $C\in |M'+\widetilde{f}^{*}\mathfrak{a}|$.
Put $C':=\mathbf{\Phi}(C)$.
Then we can compute the arithmetic genera of $C$ and $C'$ independently
and the difference $p_a(C')-g(C)$ is as follows.

\begin{lem} \label{CC'lem}
We have
\begin{align*}
0\le 2p_a(C')-2g(C)&=\frac{d}{e}M^2-d^{n-1}e(d-1)\mathrm{deg}f_{*}\mathcal{L} \nonumber \\
&\ -d^{n-1}(e'+2d)c-d^{n-1}(e'+d)\ell-\left(\frac{1}{m}Z+E\right)C.
\end{align*}
\end{lem}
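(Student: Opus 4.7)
The strategy is three-fold: establish the inequality via birationality, compute the two genera independently, and simplify the difference using the identities \eqref{MM'eq}, \eqref{beta''eq}, \eqref{beta''aieq}.

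For the inequality, condition~(1) of Theorem~\ref{Mainthm} implies that $\mathbf{\Phi}\colon\widetilde S\to X$ is birational onto $X$ (it embeds the general fibre into $\mathbb P^n$). For $\mathfrak a$ sufficiently ample, $|M'+\widetilde f^*\mathfrak a|$ is base-point free, so a general smooth $C$ meets the isomorphism locus of $\mathbf{\Phi}$ and $\mathbf{\Phi}|_C\colon C\to C'$ is birational, giving $g(C)\le p_a(C')$.

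For $2g(C)-2=(K_{\widetilde S}+C)C$ I would use $K_{\widetilde S}=\widetilde f^*K_B+\rho^*K_f+E$, $m\rho^*K_f=M+Z$ and $C=M'+\widetilde f^*\mathfrak a$, together with $M\widetilde F=M'\widetilde F=ed^{n-1}$, the identity $MM'=M^2-ced^{n-1}$ (which follows from $M-M'\equiv c\widetilde F+\widetilde\Phi^*E_\tau$ and the vanishings $M\cdot\widetilde\Phi^*E_\tau=0=\widetilde F\cdot\widetilde\Phi^*E_\tau$ already used in the derivation of~\eqref{MM'eq}), and the fact that any $\widetilde f$-vertical divisor kills $\widetilde f^*\mathfrak a$ numerically; finally \eqref{MM'eq} turns $M'^2$ into $M^2-d^{n-1}e(2c+\ell)$.

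For $2p_a(C')-2$, I would realise $C'$ inside the smooth threefold $W$ as the scheme-theoretic intersection of $X$ with a general divisor of class $eT_W+\pi_W^*\mathfrak a$ (the pullback of a general hyperplane of class $\mathbf T+\mathbf\Pi^*\mathfrak a$ in $\mathbf P$). By Lemma~\ref{zeroseq} and Bertini, $C'$ is then a locally complete intersection in $W$, cut out by a regular section of $V\oplus\mathcal O_W(eT_W+\pi_W^*\mathfrak a)$, so that
\[
2p_a(C')-2=\bigl(K_W+c_1(V)+eT_W+\pi_W^*\mathfrak a\bigr)\cdot c_{n-1}(V)\cdot(eT_W+\pi_W^*\mathfrak a).
\]
Plugging in $K_W\equiv-(n+1)T_W+\pi_W^*(K_B+\det\mathcal E)$, $c_1(V)\equiv(n-1)dT_W+(\sum a_i)\Gamma_W$, and the relations $T_W^{n+1}=\deg\mathcal E$, $T_W^n\Gamma_W=1$, $\Gamma_W^2=0$, yields $2p_a(C')-2$ as an explicit polynomial in $\deg\mathcal E,\ \sum a_i,\ \deg\mathfrak a,\ b,\ e$ and $e'$.

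Subtracting, the $b$- and $\mathfrak a$-dependent terms cancel automatically (using also $(\tfrac1mZ+E)\widetilde f^*\mathfrak a=0$, so $(\tfrac1mZ+E)M'=(\tfrac1mZ+E)C$); the $\sum a_i$ contribution is eliminated by \eqref{beta''eq} and \eqref{beta''aieq} in the combined form $M'^2=e^2d^{n-2}\sum a_i+e^2d^{n-1}\deg\mathcal E$, after which \eqref{MM'eq} re-expresses $M'^2$ in terms of $M^2$. The crucial algebraic simplification $e=me'$ then collapses the coefficient of $M^2$ to $d/e$, producing the stated formula. The main technical point is the Bertini-type argument that $C'$ is a locally complete intersection in $W$ with the expected normal bundle; once this is granted, the rest is bookkeeping of intersection numbers in the Chow ring of $W$.
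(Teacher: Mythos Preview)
Your proposal is correct and follows the same overall architecture as the paper: compute $2g(C)-2$ by adjunction on $\widetilde S$, compute $2p_a(C')-2$ inside $W$, and subtract using \eqref{MM'eq}, \eqref{beta''eq}, \eqref{beta''aieq}. The one substantive difference is in how you obtain $2p_a(C')-2$. You use the adjunction formula for a local complete intersection curve directly: since $C'$ is the zero locus of a regular section of $V\oplus L$ with $L=\mathcal O_W(e)\otimes\pi_W^*\mathfrak a$, one has $\omega_{C'}\cong(K_W\otimes\det V\otimes L)|_{C'}$ and hence
\[
2p_a(C')-2=\bigl(K_W+c_1(V)+c_1(L)\bigr)\cdot c_{n-1}(V)\cdot c_1(L).
\]
The paper instead reaches the same endpoint by a longer route: it applies Grothendieck--Riemann--Roch to the normalization map $\varphi\colon C\to W$ to obtain $ch_{n+1}(\varphi_*\mathcal O_C)=1-g(C)+\tfrac12 K_WC'$, combines this with the normalization sequence $0\to\mathcal O_{C'}\to\varphi_*\mathcal O_C\to\mathcal Q\to 0$ to get $ch_{n+1}(\mathcal O_{C'})=1-p_a(C')+\tfrac12 K_WC'$, and then computes $ch(\mathcal O_{C'})=(1-\tfrac12\sum_i\rho_i)\prod_i\rho_i$ from the Koszul resolution. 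Unwinding, this yields exactly your adjunction identity, so the two computations agree line by line with the paper's \eqref{C'eq}. Your version is more economical and avoids the GRR step entirely; the paper's version has the small expository advantage of making the non-negativity $p_a(C')-g(C)=\mathrm{length}(\mathcal Q)\ge 0$ transparent via the normalization cokernel, but your birationality argument for $\mathbf\Phi|_C$ covers this just as well.
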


\begin{proof}
From the adjunction formula $K_C=(K_{\widetilde{S}}+C)|_{C}$, we have
\begin{align}
2g(C)-2&=(K_{\widetilde{S}}+C)C \nonumber \\
&=(\rho^{*}K_f+E+(2b-2)\widetilde{F}+C)C \nonumber \\
&=(\rho^{*}K_f+E+(2b-2)\widetilde{F}+M'+a\widetilde{F})(M'+a\widetilde{F}) \nonumber \\
&=\left(\left(1+\frac{1}{m}\right)M+\frac{1}{m}Z+E+(2b-2+a-c)\widetilde{F}-\widetilde{\Phi}^{*}E_{\tau}\right) \cdot  \nonumber \\
& \ \ \ \ \ \left(M+(a-c)\widetilde{F}-\widetilde{\Phi}^{*}E_{\tau}\right) \nonumber \\
&=\left(1+\frac{1}{m}\right)M^{2}+\left(\frac{1}{m}Z+E\right)C \nonumber \\
&\ +d^{n-1}e\left(2b-2+\left(2+\frac{1}{m}\right)(a-c)-\ell\right), \label{Ceq} 
\end{align}
where $b:=g(B)$ and $a:=\mathrm{deg}\mathfrak{a}$.

Next, we compute the arithmetic genus $p_a(C')$.
Since $C\to C'$ is the normalization of $C'$, the exact sequence
\begin{equation*}
0\to \mathcal{O}_{C'}\to \varphi_{*}\mathcal{O}_C\to \mathcal{Q}\to 0
\end{equation*}
holds on $W$, where we put $\varphi:=\mathbf{\Phi}|_C\colon C\to C'\subset W$ and the cokernel $\mathcal{Q}$ is a torsion sheaf on $W$ which satisfies $ch(\mathcal{Q})=ch_{n+1}(\mathcal{Q})=p_a(C')-g(C)$.
Then we have
\begin{equation} \label{normcheq}
ch(\varphi_{*}\mathcal{O}_C)=ch(\mathcal{O}_{C'})+p_a(C')-g(C).
\end{equation}
Applying the Grothendiek Riemann-Roch theorem to $\varphi\colon C\to W$ and $\varphi_{*}\mathcal{O}_C$, we have
$$
ch(\varphi_{*}\mathcal{O}_C)td(\mathcal{T}_W)=\varphi_{*}(ch(\mathcal{O}_C)td(\mathcal{T}_C)),
$$
where $\mathcal{T}_Y$ is the tangent sheaf of a variety $Y$ and note that $R^{k}\varphi_{*}\mathcal{O}_C=0$ for $k\ge 1$.
By computing this, we have $ch_n(\varphi_{*}\mathcal{O}_C)=[C']_{W}$ and 
$$
ch_{n+1}(\varphi_{*}\mathcal{O}_C)=1-g(C)+\frac{1}{2}K_WC'.
$$
Combining this with \eqref{normcheq}, we have
\begin{equation} \label{chc'eq}
ch_{n+1}(\mathcal{O}_{C'})=1-p_a(C)+\frac{1}{2}K_WC'.
\end{equation}
On the other hand, from Lemma~\ref{zeroseq}, $C'$ is the zeros of some global section $(s,s')$ of the vector bundle $V\oplus L$, where $L:=\mathcal{O}_W(e)\otimes \pi_W^{*}\mathfrak{a}$ and hence $\mathcal{O}_{C'}$ has the Koszul resolution
$$
0\to \wedge^{n}(V\oplus L)^{*}\to \wedge^{n-1}(V\oplus L)^{*}\to \cdots \to \wedge^{2}(V\oplus L)^{*}\to (V\oplus L)^{*}\xrightarrow{(s,s')^{*}} \mathcal{O}_{W}\to \mathcal{O}_{C'}\to 0.
$$
It follows that
\begin{align*}
ch(\mathcal{O}_{C'})&=\sum_{k=0}^{n}(-1)^{k}\sum_{1\le i_1<\cdots<i_k\le n}e^{-(\rho_{i_1}+\cdots+\rho_{i_k})} \\
&=\sum_{k=0}^{n}(-1)^{k}\sum_{1\le i_1<\cdots<i_k\le n}\sum_{j=0}^{n+1}(-1)^{j}\frac{(\rho_{i_1}+\cdots+\rho_{i_k})^{j}}{j!} \\
&=\sum_{j=0}^{n+1}\left(\sum_{k=0}^{n}(-1)^{k+j}\sum_{1\le i_1<\cdots<i_k\le n}\frac{(\rho_{i_1}+\cdots+\rho_{i_k})^{j}}{j!}\right) \\
&=\left(1-\frac{1}{2}\sum_{i=1}^{n}\rho_i\right)\prod_{i=1}^{n}\rho_i,
\end{align*}
where $\rho_n:=c_1(L)\equiv eT_W+a\Gamma_W$.
Thus we have
\begin{equation}\label{chc'eq2}
ch_{n+1}(\mathcal{O}_{C'})=-\frac{1}{2}\left(((n-1)d+e)T_W+\left(\sum_{i=1}^{n-1}a_i+a\right)\Gamma_W\right)C'.
\end{equation}
Substituting \eqref{chc'eq2} into \eqref{chc'eq}, we have

\begin{align}
2p_a(C')-2
&=\left(\left(1+\frac{1}{m}\right)eT_{W}+\left(2b-2+\mathrm{deg}f_{*}\mathcal{L}+\sum_{i=1}^{n-1}a_i+a\right)\Gamma_{W}\right)C' \nonumber \\
&=d^{n-1}e\left(\left(1+\frac{1}{m}\right)e+1\right)\mathrm{deg}f_{*}\mathcal{L}+e\left(\left(1+\frac{1}{m}\right)e+d\right)\beta_W \nonumber \\
&\ +d^{n-1}e\left(2b-2+\left(2+\frac{1}{m}\right)a\right) \nonumber \\
&=\frac{1}{e}\left(\left(1+\frac{1}{m}\right)e+d\right)(M^2-d^{n-1}e(2c+\ell))-d^{n-1}e(d-1)\mathrm{deg}f_{*}\mathcal{L} \nonumber \\
& \ +d^{n-1}e\left(2b-2+\left(2+\frac{1}{m}\right)a\right). \label{C'eq} 
\end{align}

Subtracting \eqref{Ceq} from \eqref{C'eq}, we have the desired equation.
\end{proof}

\begin{rem}
If $X$ is a complete intersection $X=\mathcal{H}_1\cap \cdots \cap \mathcal{H}_{n-1}$, we can calculate $p_a(C')$ more easily by the adjunction formula.
\end{rem}

Next, we show the following equality among $K_f^2$, $\chi_f$, $c$ and $\ell$.

\begin{lem} \label{f*Llem}
We have
\begin{align*}
(u-dv)\mathrm{deg}f_{*}\mathcal{L}=&\left(\frac{1}{2}m(m-1)-\frac{m^2v}{d^{n-2}e^2}\right)K_f^2+\chi_f \nonumber \\
&+\left(\frac{2dv}{e}-\left(1-\frac{1}{2m}\right)d^{n-1}e\right)c+\left(\frac{dv}{e}-1\right)\ell
\end{align*}
for $m\gg 0$, where
\begin{align*}
u:=&\frac{d^{n-1}}{2}\left(e^2-((n-1)d-n)e+2(n-1)r\right), \\
v:=&\frac{d^{n-2}}{2}\left(e^2-(nd-(n+1))e+2(n+1)r\right)
\end{align*}
and 
$$
r:=\frac{1}{24}(d-1)((3n-2)d-(3n+2)).
$$
\end{lem}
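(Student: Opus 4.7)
My plan is to compute $\chi(\mathcal{O}_X(e))$ in two independent ways and equate the results.

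On one side, I apply Riemann--Roch on the surface $X\subset W$:
$$\chi(\mathcal{O}_X(e)) = \chi(\mathcal{O}_X) + \tfrac{e^2}{2}H^2 - \tfrac{e}{2}HK_X,\qquad H:=T_W|_X.$$
Using $[X]_W = c_{n-1}(V)=\prod_{i=1}^{n-1}\rho_i$ from Lemma~\ref{zeroseq}, the adjunction $K_X=(K_W+c_1(V))|_X$, and the same binomial identities $\sigma_{m,l}$ as in the proof of Lemma~\ref{exalem} (which is precisely where the constant $r$ emerges), the quantities $H^2$, $HK_X$, and $\chi(\mathcal{O}_X)$ expand into explicit linear combinations of $\mathrm{deg}f_{*}\mathcal{L}$, $\sum_i a_i$, and $(b-1)$. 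Collecting the coefficients of $\mathrm{deg}f_{*}\mathcal{L}$ and $\sum_i a_i$ reproduces exactly $u$ and $v$, and the remaining $(b-1)$-term simplifies to $-(b-1)(2m-1)(g-1)$.

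On the other side, for $e\gg 0$ Leray gives $\chi(\mathcal{O}_X(e))=\chi(\pi_{W*}\mathcal{O}_X(e))$, and $\pi_{W*}\mathcal{O}_X(e)$ is identified with $f_{*}\mathcal{L}^{\otimes e}$ modulo torsion (accounted for by $\ell$). The natural inclusion $f_{*}\mathcal{L}^{\otimes e}\hookrightarrow f_{*}\omega_f^{\otimes m}(\mathfrak{b})$ implicit in the construction before~\eqref{Kex}, together with~\eqref{Kex}, gives
$$\chi(f_{*}\mathcal{L}^{\otimes e})=\chi(f_{*}\omega_f^{\otimes m})-c(2m-1)(g-1)-\ell,$$
and then Grothendieck--Riemann--Roch for $f$ (Mumford's formula) yields $\mathrm{deg}f_{*}\omega_f^{\otimes m}=\chi_f+\tfrac{m(m-1)}{2}K_f^2$ (exact for $m\ge 2$ by $R^1f_{*}\omega_f^{\otimes m}=0$). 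Converting via $\chi(\mathcal{F})=\mathrm{deg}\mathcal{F}+(1-b)\,\mathrm{rank}\mathcal{F}$ and $\mathrm{rank}(f_{*}\omega_f^{\otimes m})=(2m-1)(g-1)$ produces
$$\chi(\mathcal{O}_X(e))=\chi_f+\tfrac{m(m-1)}{2}K_f^2+(1-b)(2m-1)(g-1)-c(2m-1)(g-1)-\ell.$$

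Equating the two expressions and cancelling $(1-b)(2m-1)(g-1)$ gives
$$u\,\mathrm{deg}f_{*}\mathcal{L}+v\sum_i a_i=\chi_f+\tfrac{m(m-1)}{2}K_f^2-c(2m-1)(g-1)-\ell.$$
Substituting $\sum_i a_i = \tfrac{M'^2}{e^2 d^{n-2}}-d\,\mathrm{deg}f_{*}\mathcal{L}$ from~\eqref{beta''eq} and~\eqref{beta''aieq} assembles $(u-dv)\,\mathrm{deg}f_{*}\mathcal{L}$ on the left. Expanding $M'^2$ via~\eqref{MM'eq} and replacing $M^2$ by $m^2K_f^2$ via~\eqref{KfMeq} produces the correction $-\tfrac{m^2v}{d^{n-2}e^2}K_f^2$, while the identity $(2m-1)(g-1)=(1-\tfrac{1}{2m})d^{n-1}e$ (using $e=me'$ and $g-1=\tfrac{e'd^{n-1}}{2}$) matches the stated coefficient of $c$.

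The main obstacle will be controlling the residual term $\tfrac{v(2MZ+Z^2)}{e^2 d^{n-2}}$ arising from the exact expansion $M^2=m^2K_f^2-2MZ-Z^2$. For the equality to hold on the nose, this contribution must either vanish or be absorbed into the other torsion-type corrections; this should follow for $m\gg 0$ from the nefness of $K_f$ (by relative minimality), the $\widetilde{f}$-verticality of $Z$, and the compatibility of $Z$ with the divisor $D$ from $e'\mathcal{L}+D\sim K_f$. Careful bookkeeping of these vertical contributions, together with the identification $\pi_{W*}\mathcal{O}_X(e)\simeq f_{*}\mathcal{L}^{\otimes e}$ modulo the torsion already parametrized by $\ell$, is where the technical heart of the proof lies.
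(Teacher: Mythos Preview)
Your overall strategy is the same as the paper's: compute $\deg\pi_{W*}\mathcal{O}_X(e)$ (equivalently $\chi(\mathcal{O}_X(e))$) two ways, once by Riemann--Roch/GRR on $W$ and once via the exact sequence \eqref{Kex}, then substitute using \eqref{beta''eq}, \eqref{beta''aieq}, \eqref{MM'eq}, \eqref{KfMeq}. However, two steps in your write-up are not right as stated.

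First, the passage through $f_*\mathcal{L}^{\otimes e}$ is muddled. You assert that $\pi_{W*}\mathcal{O}_X(e)$ agrees with $f_*\mathcal{L}^{\otimes e}$ ``modulo torsion accounted for by $\ell$'', and then derive $\chi(f_*\mathcal{L}^{\otimes e})=\chi(f_*\omega_f^{\otimes m})-c(2m-1)(g-1)-\ell$ from \eqref{Kex}. Neither claim is what \eqref{Kex} says. The sheaf $\mathcal{T}$ of length $\ell$ is the cokernel of $\mathrm{Sym}^{e}f_*\mathcal{L}/\mathcal{K}\hookrightarrow f_*\omega_f^{\otimes m}(-\mathfrak{c})$, \emph{not} the cokernel of $\pi_{W*}\mathcal{O}_X(e)\hookrightarrow f_*\mathcal{L}^{\otimes e}$, and \eqref{Kex} gives you $\deg(\mathrm{Sym}^{e}f_*\mathcal{L}/\mathcal{K})$, not $\deg f_*\mathcal{L}^{\otimes e}$. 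The paper avoids this detour entirely: one checks that the multiplication factors as $\pi_{W*}\mathcal{O}_W(e)\to\pi_{W*}\mathcal{O}_X(e)\hookrightarrow f_*\mathcal{L}^{\otimes e}$ with the second arrow injective, whence $\mathcal{K}=\pi_{W*}\mathcal{I}_{X/W}(e)$; Serre vanishing for $m\gg0$ then gives $\pi_{W*}\mathcal{O}_X(e)=\mathrm{Sym}^{e}f_*\mathcal{L}/\mathcal{K}$ on the nose, and \eqref{Kex} yields directly $\deg\pi_{W*}\mathcal{O}_X(e)=\tfrac{m(m-1)}{2}K_f^2+\chi_f-(2m-1)(g-1)c-\ell$. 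Your displayed formula is in fact this identity (after converting $\deg$ to $\chi$), but attributed to the wrong sheaf.

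Second, the ``main obstacle'' you flag --- the residual $Z$-term --- is not handled by a nefness estimate or by absorption into the torsion. The paper's resolution is that $Z=0$ identically for $m\gg0$: since $f$ is relatively minimal, $K_f$ is $f$-nef (and $f$-big), so by the relative base-point-free theorem $|mK_f|$ is relatively free for large $m$, the rational map $S\dashrightarrow P=\mathbb{P}_B(f_*\omega_f^{\otimes m})$ is already a morphism, and hence $M=m\rho^*K_f$ with no defect. This is the reason for the hypothesis ``$m\gg0$'' in the statement, and it is what you are missing.
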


\begin{proof}
We can see that $\mathcal{K}\simeq \pi_{W*}\mathcal{I}_{X/W}(e)$, where $\mathcal{I}_{X/W}$ is the ideal sheaf of $X$ in $W$, since the natural homomorphism
$$
\pi_{W*}\mathcal{O}_X(e)\to \widetilde{f}_{*}\mathbf{\Phi}^{*}\mathcal{O}_W(e)\to \widetilde{f}_{*}\rho^{*}\mathcal{L}^{\otimes e}=f_{*}\mathcal{L}^{\otimes e}
$$
is injective and the multiplicative map is decomposed as follows:
$$
\mathrm{Sym}^{e}f_{*}\mathcal{L}=\pi_{W*}\mathcal{O}_W(e)\to \pi_{W*}\mathcal{O}_X(e)\to f_{*}\mathcal{L}^{\otimes e}.
$$
From the Serre vanishing theorem, we have $R^{i}\pi_{W*}\mathcal{I}_{X/W}(e)=R^{i}\pi_{W*}\mathcal{O}_{X}(e)=0$ for $i>0$ and $m\gg 0$
and then the following exact sequence holds:
$$
0\to \pi_{W*}\mathcal{I}_{X/W}(e) \to \pi_{W*}\mathcal{O}_{W}(e) \to \pi_{W*}\mathcal{O}_{X}(e) \to 0.
$$
Thus, from \eqref{Kex}, we have
\begin{equation} \label{OXedegeq}
\mathrm{deg}\pi_{W*}\mathcal{O}_{X}(e)=\mathrm{deg}f_{*}\omega_f^{\otimes m}(-\mathfrak{c})-\ell=\frac{m(m-1)}{2}K_f^2+\chi_f-(2m-1)(g-1)c-\ell.
\end{equation}

On the other hand, from the Grothendieck Riemann-Roch theorem, we have
$$
ch(\pi_{W*}\mathcal{O}_{X}(e))=\pi_{W*}(ch(\mathcal{O}_{X}(e))td(\mathcal{T}_{\pi_{W}})),
$$
where $\mathcal{T}_{\pi_{W}}=\Omega_{W/B}^{*}$ is the relative tangent sheaf of $\pi_W\colon W\to B$.
From the Euler sequence
$$
0\to \Omega_{W/B}(1)\to \pi_{W}^{*}f_{*}\mathcal{L}\to \mathcal{O}_{W}(1)\to 0,
$$
we have $c(\mathcal{T}_{\pi_{W}})=c((\pi_{W}^{*}f_{*}\mathcal{L})^{*}\otimes \mathcal{O}_{W}(1))$.
By the splitting principle, we may assume that $c(f_{*}\mathcal{L})=\prod_{i=1}^{n+1}(1+\eta_i)$ for some $\eta_i\in A^{1}(B)$.
Then 
\begin{align*}
c(\mathcal{T}_{\pi_{W}})&=c((\pi_{W}^{*}f_{*}\mathcal{L})^{*}\otimes \mathcal{O}_{W}(1)) \\
&=\prod_{i=1}^{n+1}(1-\pi_{W}^{*}\eta_{i}+T_{W}) \\
&=1+\left((n+1)T_{W}-\pi_{W}^{*}c_{1}(f_{*}\mathcal{L})\right)+\left(\frac{n(n+1)}{2}T_{W}^{2}-nT_{W}\pi_{W}^{*}c_{1}(f_{*}\mathcal{L})\right)+\cdots.\\
\end{align*}
Hence we get
\begin{equation} \label{tdeq}
td(\mathcal{T}_{\pi_{W}})=1+\frac{1}{2}\nu_1+\frac{1}{12}(\nu_1^{2}+\nu_2)+\cdots,
\end{equation}
where 
$$
\nu_1=(n+1)T_{W}-\pi_{W}^{*}c_{1}(f_{*}\mathcal{L}), \quad \nu_2=\frac{n(n+1)}{2}T_{W}^{2}-nT_{W}\pi_{W}^{*}c_{1}(f_{*}\mathcal{L}).
$$
While we have
\begin{equation} \label{cheeq}
ch(\mathcal{O}_{X}(e))=ch(\mathcal{O}_{X})ch(\mathcal{O}_{W}(e))=ch(\mathcal{O}_{X})\left(1+eT_{W}+\frac{e^2}{2}T_{W}^2+\cdots\right)
\end{equation}
and we can compute $ch(\mathcal{O}_{X})$ as follows.
The subvariety $X$ can be regarded as the zeros of a section $s$ of the vector bundle $V$ from Lemma~\ref{zeroseq}
and hence $\mathcal{O}_X$ has the Koszul resolution
$$
0\to \wedge^{n-1}V^{*}\to \wedge^{n-2}V^{*}\to \cdots \to \wedge^{2}V^{*}\to V^{*}\xrightarrow{s^{*}} \mathcal{O}_{W}\to \mathcal{O}_{X}\to 0.
$$
Thus we have 
\begin{align}
ch(\mathcal{O}_{X})&=\sum_{k=0}^{n-1}(-1)^{k}ch(\wedge^{k}V^{*}) \nonumber \\
&=\sum_{k=0}^{n-1}(-1)^{k}\sum_{1\le i_1<\cdots<i_k\le n-1}e^{-(\rho_{i_1}+\cdots+\rho_{i_k})} \nonumber \\
&=\sum_{k=0}^{n-1}(-1)^{k}\sum_{1\le i_1<\cdots<i_k\le n-1}\sum_{j=0}^{n+1}(-1)^{j}\frac{(\rho_{i_1}+\cdots+\rho_{i_k})^{j}}{j!} \nonumber \\
&=\sum_{j=0}^{n+1}\left(\sum_{k=0}^{n-1}(-1)^{k+j}\sum_{1\le i_1<\cdots<i_k\le n-1}\frac{(\rho_{i_1}+\cdots+\rho_{i_k})^{j}}{j!}\right) \nonumber \\
&=\left(1-\frac{1}{2}\sum_{i=1}^{n-1}\rho_i+\frac{1}{4}\sum_{i<i'}\rho_{i}\rho_{i'}+\frac{1}{6}\sum_{i=1}^{n-1}\rho_{i}^{2}\right)\prod_{i=1}^{n-1}\rho_i. \label{chXeq}
\end{align}
From \eqref{tdeq}, \eqref{cheeq} and \eqref{chXeq}, we can compute $\mathrm{deg}(\pi_{W*}\mathcal{O}_{X}(e))$ as follows.
\begin{align}
\mathrm{deg}(\pi_{W*}\mathcal{O}_{X}(e))&=\mathrm{deg}\pi_{W*}((ch(\mathcal{O}_{X}(e))td(\mathcal{T}_{\pi_{W}}))_{n+1}) \nonumber \\
&=u\mathrm{deg}f_{*}\mathcal{L}+v\sum_{i=1}^{n-1}a_i \nonumber \\
&=(u-dv)\mathrm{deg}f_{*}\mathcal{L}+\frac{v}{d^{n-2}e^2}M^2-\frac{dv}{e}(2c+\ell). \label{OXedegeq2}
\end{align}
where we use \eqref{beta''eq}, \eqref{beta''aieq} and \eqref{MM'eq}.
Combining \eqref{OXedegeq2} with \eqref{KfMeq}, \eqref{OXedegeq} and $Z=0$ for $m\gg 0$ from the base point free theorem, we have
\begin{align*}
&(u-dv)\mathrm{deg}f_{*}\mathcal{L}+\frac{v}{d^{n-2}e^2}m^2K_f^2-\frac{dv}{e}(2c+\ell) \\
&=\frac{1}{2}m(m-1)K_f^2+\chi_f-\left(1-\frac{1}{2m}\right)d^{n-1}ec-\ell,
\end{align*}
which implies the desired equation.
\end{proof}

\begin{prfofthm1}
Eliminating the term $\mathrm{deg}f_{*}\mathcal{L}$ in Lemma~\ref{CC'lem} by using Lemma~\ref{f*Llem},
we have
$$
K_f^2=\lambda_{n,d}\chi_f+p_1(p_a(C')-g(C))+p_2\ell+p_3EC,
$$
where 
$$
\lambda_{n,d}:=\frac{(d-1)e'}{r}=\frac{24((n-1)d-(n+1))}{(3n-2)d-(3n+2)}
$$
and
the coefficients $p_i$ can be computed and become positive rational numbers.
In particular, we have $K_f^2\ge \lambda_{n,d}\chi_f$.
\end{prfofthm1}





We close this section by stating a conjecture for complete intersection curve fibrations.
A fibered surface $f\colon S\to B$ is said to be a {\em complete intersection curve fibrations of type $(n,d)$} if the general fiber of $f$ is a complete intersection of $n-1$ hypersurfaces of degree $d$ in $\mathbb{P}^n$.

\begin{conj} \label{slopeeqconj}
Let $\mathcal{A}_{n,d}$ be the set of holomorphically equivalence classes of fiber germs of relatively minimal complete intersection curve fibrations of type $(n,d)$.
Then the Horikawa index $\mathrm{Ind}_{n,d}\colon \mathcal{A}_{n,d}\to \mathbb{Q}_{\ge 0}$ is defined such that
for any relatively minimal complete intersection curve fibration $f\colon S\to B$ of type $(n,d)$, the slope equality
$$
K_f^2=\lambda_{n,d}\chi_f+\sum_{p\in B}\mathrm{Ind}_{n,d}(f^{-1}(p))
$$
holds.
\end{conj}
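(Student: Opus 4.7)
The plan is to localize the slope-inequality argument that yielded Theorem~\ref{Mainthm}. That proof produced an identity
$$K_f^2 = \lambda_{n,d}\chi_f + p_1\bigl(p_a(C')-g(C)\bigr) + p_2\ell + p_3\,EC,$$
in which every error term is non-negative and is supported over finitely many points of $B$. The task is therefore twofold: (i) to establish this identity for every relatively minimal complete intersection curve fibration of type $(n,d)$, without imposing hypotheses (1) and (2) of Theorem~\ref{Mainthm}, and (ii) to split each correction into a sum of contributions attached to individual fiber germs.

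Step (ii) is essentially formal. The singular points of $C'$ lie over a finite subset of $B$, the torsion sheaf $\mathcal{T}$ on $B$ has finite-length stalks, and the $\rho$-exceptional divisor $E$ meets $C$ only over the fibers that $\rho$ modifies. This suggests defining
$$\mathrm{Ind}_{n,d}\bigl(f^{-1}(p)\bigr) := p_1\bigl(p_a(C')-g(C)\bigr)_p + p_2\,\mathrm{length}_p(\mathcal{T}) + p_3\,(EC)_p,$$
which is manifestly a non-negative rational number. For step (i), hypotheses (1) and (2) are satisfied over the generic point, so the first move is to build a line bundle $\mathcal{L}$ and a relative complete-intersection image $X\to B$ on a disc around each $p\in B$ after a birational modification of $S$ and a divisorial twist; when $f^{-1}(p)$ itself is not a complete intersection of type $(n,d)$, the failure should be absorbed into $\ell$ and into $EC$.

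The main obstacle will be showing that $\mathrm{Ind}_{n,d}(f^{-1}(p))$ depends only on the holomorphic equivalence class of the germ $f^{-1}(p)\subset S$, independently of $\mathcal{L}$, of the integer $m$ implicit in $e=me'$, and of the global geometry of $f$. Invariance in $m$ amounts to a consistency check on the Grothendieck--Riemann--Roch bookkeeping appearing in Lemmas~\ref{CC'lem} and \ref{f*Llem}; invariance under change of $\mathcal{L}$ is deeper and would require comparing two such choices by a common refinement and verifying that the local contributions coincide. A further subtlety is the possible nonexistence of a canonical ``limit complete intersection structure'' at wildly degenerate fibers, which is plausibly the reason the statement is offered here only as a conjecture rather than a theorem.
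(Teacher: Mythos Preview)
The statement you are attempting to prove is labeled a \emph{conjecture} in the paper, and the paper does not supply a proof; it only remarks that the case $n=2$ is established in \cite{Eno}. So there is no ``paper's own proof'' to compare your proposal against, and your write-up is, by your own admission in the final paragraph, not a proof but a plan with unresolved obstacles.

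Concretely, your proposal has two genuine gaps that prevent it from being a proof. First, the identity from the proof of Theorem~\ref{Mainthm} was derived \emph{using} hypotheses (1) and (2); in particular Lemma~\ref{zeroseq}, which underlies the computation of $p_a(C')$ and of $\mathrm{deg}\,\pi_{W*}\mathcal{O}_X(e)$, requires that every fiber $X_t$ be a complete intersection of the prescribed type. You assert that failure of (2) at a bad fiber ``should be absorbed into $\ell$ and $EC$,'' but nothing in the argument shows this: without (2) you do not even have a vector bundle $V$ with Koszul resolution for $\mathcal{O}_X$, so the formulas \eqref{chc'eq2} and \eqref{chXeq} break down, and there is no a priori reason the discrepancy is non-negative or local. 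Second, even granting an identity of the desired shape, well-definedness of $\mathrm{Ind}_{n,d}$ as a function on $\mathcal{A}_{n,d}$ requires that your local expression be independent of the many auxiliary choices: the line bundle $\mathcal{L}$, the integer $m$ (which must be taken $\gg 0$ for Lemma~\ref{f*Llem}), the general member $C\in|M'+\widetilde{f}^*\mathfrak{a}|$, and the global model $f\colon S\to B$ containing the germ. You flag these issues but do not resolve any of them; the independence from $\mathcal{L}$ in particular is, as you note, a deep point and is exactly why the paper states this as a conjecture rather than a theorem.
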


Note that Conjecture~\ref{slopeeqconj} holds for $n=2$ in \cite{Eno}.

\section{The negativity of the signature of the Milnor fiber}

In this section, we prove the following theorem by applying Theorem~\ref{Mainthm}.
\begin{thm} \label{Signthm}
Let $(X,o)$ be a germ of an isolated complete intersection surface singularity with embedding dimension $n$.
Let $\pi\colon \widetilde{X}\to X$ be the minimal resolution of $(X,o)$ and $p_g:=\mathrm{dim}_{\mathbb{C}}R^{1}\pi_{*}\mathcal{O}_{\widetilde{X}}$ the geometric genus of $(X,o)$.
Then we have
$$
\sigma\le -\frac{8}{3n-5}p_g-\#\mathrm{exc}(\pi)
$$
or equivalently,
\begin{equation} \label{pgmueq}
\frac{12(n-1)}{3n-5}p_g\le \mu+1-\chi_{\mathrm{top}}(\mathrm{exc}(\pi))
\end{equation}
with the equality holding if and only if $(X,o)$ is a rational double point,
where $\#\mathrm{exc}(\pi)$ is the number of irreducible components of the exceptional set $\mathrm{exc}(\pi)$ and $\chi_{\mathrm{top}}(\mathrm{exc}(\pi))$ its topological Euler number.
\end{thm}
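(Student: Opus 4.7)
The plan is to mimic the strategy of \cite{Eno}: realize the germ $(X,o)$ as the unique non-trivial singular fibre of a global complete intersection curve fibration $f\colon S\to B$ of type $(n,d)$, apply Theorem~\ref{Mainthm}, and compare against the smooth-model equality from Lemma~\ref{exalem} to extract a purely local slope inequality. The signature form of the theorem will then follow from the local slope inequality via Steenbrink's identities on the vanishing cohomology.

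First I would construct the global family. Starting from the defining equations $h=(h_1,\ldots,h_{n-2})$, choose homogeneous polynomials $\widetilde{H}_1,\ldots,\widetilde{H}_{n-2}$ on $\mathbb{P}^n$ of a common, sufficiently high, degree $d$ matching the $h_i$ to high enough jet order at a point $o'\in\mathbb{P}^n$, and add one generic pencil of degree-$d$ hypersurfaces to vary one further parameter. After resolving base points and passing to the relatively minimal model, this yields a complete intersection curve fibration $f\colon S\to B$ of type $(n,d)$ whose unique non-generic fibre analytically reproduces $(X,o)$, and for which conditions (1) and (2) of Theorem~\ref{Mainthm} hold (with $\mathcal{L}=\mathcal{O}_S(1)$) by Bertini plus a relative base-locus analysis inside $\mathbb{P}_B(f_\ast\mathcal{L})$.

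Then Theorem~\ref{Mainthm} gives $K_f^2\ge\lambda_{n,d}\chi_f$. Comparing with a reference fibration $\overline{f}\colon\overline{S}\to B$ built as in \S1 from generic hypersurfaces in a $\mathbb{P}^n$-bundle over $B$ with identical numerical data outside the distinguished fibre and satisfying $K_{\overline{f}}^2=\lambda_{n,d}\chi_{\overline{f}}$ by Lemma~\ref{exalem}, the slope inequality reduces to the local inequality $I_K\ge\lambda_{n,d}I_\chi$ at the singular fibre, with $I_K:=K_f^2-K_{\overline{f}}^2$ and $I_\chi:=\chi_f-\chi_{\overline{f}}$. On the minimal resolution $\pi\colon\widetilde{X}\to X$ one has $I_\chi=p_g$ by the definition of the geometric genus, and Noether's formula combined with $\chi_{\mathrm{top}}(M)=1+\mu$ gives $12I_\chi-I_K=\mu+1-\chi_{\mathrm{top}}(\mathrm{exc}(\pi))$. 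Eliminating $I_K$ yields the second form
\[
\frac{12(n-1)}{3n-5}\,p_g\ \le\ \mu+1-\chi_{\mathrm{top}}(\mathrm{exc}(\pi)),
\]
with equality iff equality holds in Theorem~\ref{Mainthm} on the local contribution, iff $p_g=0$ and $(X,o)$ is a rational double point. The equivalent signature form follows from Steenbrink's identity $\mu_++\mu_0=2p_g$ together with $\mu_0=1+\#\mathrm{exc}(\pi)-\chi_{\mathrm{top}}(\mathrm{exc}(\pi))$ (coming from the first Betti number of the dual graph/link), which combine to give $\sigma+\mu=4p_g-\mu_0$; substituting into the displayed inequality rearranges it exactly into $\sigma\le-\tfrac{8}{3n-5}p_g-\#\mathrm{exc}(\pi)$.

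The main obstacle will be the first step: producing the global family so that (i) conditions (1) and (2) of Theorem~\ref{Mainthm} hold on the nose inside the relative $\mathbb{P}^n$-bundle, and (ii) the analytic type of the distinguished singular fibre at the distinguished point faithfully reproduces $(X,o)$ while all other fibres remain sufficiently generic so that their contributions are absorbed into $\overline{f}$. This requires balancing a Bertini argument against jet-matching conditions at $o$, with the degree $d$ chosen large enough to accommodate both simultaneously. A secondary bookkeeping point is verifying that the constants $u-dv$, $r$, etc.\ appearing in Lemma~\ref{f*Llem} combine, after the comparison with $\overline{f}$, so that $I_K$ and $I_\chi$ isolate exactly the resolution invariants needed above; this should be a direct check since Lemma~\ref{exalem} already provides the model matching.
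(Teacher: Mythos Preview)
Your overall strategy is the paper's: globalize $(X,o)$ into a fibration to which Theorem~\ref{Mainthm} applies, subtract off the equality of Lemma~\ref{exalem}, and translate via Laufer's and Durfee's identities. But the execution has a sign error that, as written, yields the inequality in the wrong direction. For a resolution $\pi\colon S\to\overline{X}$ of a normal surface one has $\chi(\mathcal{O}_S)=\chi(\mathcal{O}_{\overline{X}})-p_g$ (Leray), so $I_\chi=\chi_f-\chi_{\overline f}=-p_g$, not $+p_g$; similarly $I_K=K_f^2-K_{\overline f}^2=K^2$, the self-intersection of the canonical cycle on $\mathrm{exc}(\pi)$. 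Laufer's formula then reads $I_K-12I_\chi=\mu+1-\chi_{\mathrm{top}}(\mathrm{exc}(\pi))$, the opposite sign from your $12I_\chi-I_K$. With your stated signs the inequality $I_K\ge\lambda\,I_\chi$ rearranges to $(12-\lambda)p_g\ge\mu+1-\chi_{\mathrm{top}}$, which is backwards. With the correct signs it gives $K^2+\lambda_{n,d}\,p_g\ge 0$, and one then needs the additional step (which you omit) that $\lambda_{n,d}<24(n-1)/(3n-2)$ for every finite $d$, so that the inequality with the limiting constant holds and is strict unless $p_g=0$; this is exactly what produces the equality characterization by rational double points.

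On the construction side, the paper is more direct than your jet-matching plus generic pencil. It views $\mathbb{C}^{n+1}\subset\mathbb{P}^n\times\mathbb{P}^1$ with the last affine coordinate giving the $\mathbb{P}^1$-factor, bi-homogenizes the defining polynomials of $(X,o)$ to a common large degree $d$, and perturbs by higher-order terms so that the closure $\overline{X}\subset\mathbb{P}^n\times\mathbb{P}^1$ has $o$ as its unique singular point and every fibre over $\mathbb{P}^1$ is a degree-$d$ complete intersection in $\mathbb{P}^n$. The crucial simplification is that Lemma~\ref{exalem} already applies to this \emph{singular} $\overline{X}$ (only normality is used), so there is no need for a separate smooth reference fibration $\overline{S}$: one takes $\overline{f}\colon\overline{X}\to\mathbb{P}^1$ itself, $f$ its minimal resolution, and the identifications $I_\chi=-p_g$, $I_K=K^2$ are immediate from the resolution map $S\to\overline{X}$.
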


\begin{proof}
Let $(X,o)$ be a germ of a $2$-dimensional isolated complete intersection singularity with embedding dimension $n+1\ge 3$.
Then $X$ can be regarded as the central fiber $h^{-1}(0)$ of a surjective holomorphic map $h=(h_1,\ldots,h_{n-1})\colon \mathbb{C}^{n+1}\to \mathbb{C}^{n-1}$ with $h(o)=0$, 
where by algebraization, we may assume that all $h_{i}$'s are polynomials in $\mathbb{C}[z_1,\ldots,z_{n+1}]$.
We can naturally regard $\mathbb{C}^{n+1}$ as a Zariski open subset of $\mathbb{P}^{n}\times \mathbb{P}^{1}$.
Let $\overline{X}$ be the closure of $X$ in $\mathbb{P}^n\times \mathbb{P}^1$.
Adding monomials of sufficiently higher order to the defining equations $h_i$, we may assume that $\overline{X}$ also has only one singularity $o$.
Let $(Z:Z_1:\cdots:Z_n)$ and $(Z':Z_{n+1})$ respectively denote the homogeneous coodinates of $\mathbb{P}^{n}$ and $\mathbb{P}^1$.
Then the defining equation $\overline{h}_i(Z,Z_1,\ldots,Z_n;Z',Z_{n+1})$ of $\overline{X}$ in $\mathbb{P}^n\times \mathbb{P}^1$ is the bi-homogenization of $h_i(z_1,\ldots,z_{n+1})$ for $i=1,\ldots,n-1$.
We may also assume that all $\overline{h}_i$'s have the same degree $d\gg 0$ with respect to $(Z:Z_1:\cdots:Z_n)$.
Let $\pi\colon S\to \overline{X}$ be the minimal resolution of $o\in \overline{X}$.
Let $\overline{f}=pr_2|_{\overline{X}}\colon \overline{X}\to \mathbb{P}^1$ be the second projection and $f:=\overline{f}\circ \pi\colon S\to \mathbb{P}^1$.
We may assume that $f$ is relatively minimal, since by adding sufficiently higher order terms to the defining equations, it follows that any component of the central fiber $f^{-1}(0)$ other than $\mathrm{exc}(\pi)$ is not a $(-1)$-curve.
We can easily check that the fibration $f\colon S\to B=\mathbb{P}^1$ satisfies the assumption of Theorem~\ref{Mainthm}.
Thus we get 
\begin{equation} \label{slopeeq}
K_f^2\ge \lambda_{n,d}\chi_f
\end{equation}
from Theorem~\ref{Mainthm}.
On the other hand, we have 
\begin{equation} \label{cycleeq}
K_{\overline{f}}^2=\lambda_{n,d}\chi_{\overline{f}}
\end{equation}
from Lemma~\ref{exalem}.
Let $p_g=\mathrm{dim}_{\mathbb{C}}R^{1}\pi_{*}\mathcal{O}_{S}$ be the geometric genus of $(X,o)$ and $K$ the canonical cycle on $\mathrm{exc}(\pi)$ (that is, $K=\sum_{i}b_iE_i$ is a cycle on the intersection form $\mathrm{exc}(\pi)=\cup_{i}E_i$ uniquely determined by the genus formula $2p_a(E_i)-2=(K+E_i)E_i$ for any $E_i$).
Thus we have 
\begin{equation} \label{pgeq}
-p_g=\chi_f-\chi_{\overline{f}}
\end{equation}
and 
\begin{equation} \label{K2eq}
K^2=K_f^2-K_{\overline{f}}^2.
\end{equation}
Subtracting \eqref{cycleeq} from \eqref{slopeeq} and using \eqref{pgeq} and \eqref{K2eq}, we have
$$
K^2+\lambda_{n,d}p_g\ge 0.
$$
Since 
$$
\lambda_{n,d}=\frac{24((n-1)d-(n+1))}{(3n-2)d-(3n+2)}\le \frac{24(n-1)}{3n-2},
$$
we have
\begin{equation} \label{K2pgineq}
K^2+\frac{24(n-1)}{3n-2}p_g\ge 0
\end{equation}
with the equality holding if and only if $(X,o)$ is a rational double point.
From Laufer's formula \cite{La}
$$
\mu=12p_g+K^2+\chi_{\mathrm{top}}(\mathrm{exp}(\pi))-1,
$$
Durfee's equality \cite{Dur}
$$
2p_g=\mu_{+}+\mu_0
$$
and
$$
\chi_{\mathrm{top}}(\mathrm{exc}(\pi))=\#\mathrm{exc}(\pi)+1-\mu_0,
$$
\eqref{K2pgineq} is equivalent to
$$
\sigma\le -\frac{8}{3n-2}p_g-\#\mathrm{exc}(\pi).
$$

\end{proof}


\begin{thebibliography}{99}


\bibitem{Dur}
A.H. Durfee, 
The signature of smoothings of complex surface singularities,
Math.\ Ann.\ \textbf{232} (1978), no.1, 85--98.

\bibitem{Eno}
M. Enokizono,
Slope equality of plane curve fibrations and its application to Durfee's conjecture, arXiv:1704.08806.

\bibitem{KeNe}
D. Kerner and A. N\'emethi,
The `corrected Durfee's inequality' for homogeneous complete intersections,
Math.\ Z.\ \textbf{274} (2013), 1385--1400.

\bibitem{KoNe}
J. Koll\'ar and A. N\'emethi,
Durfee's conjecture on the signature of smoothings of surface singularities,
to appear in Annales Sc.\ de l'Ecole Norm.\ Sup. 

\bibitem{La}
H. Laufer,
On $\mu$ for surface singularities,
in Proc.\ Symposia in Pure Math.\ \textbf{30} (1977), 45--49.

\bibitem{Wah}
J. Wahl,
Smoothings of normal surface singularities,
Topology, \textbf{20} (1981), 219--246.







\end{thebibliography}
\end{document}